\newtheorem{theorem}{Theorem}[section]
\newtheorem{remark}[theorem]{Remark}
\newtheorem{lemma}{Lemma}[section]
\newcommand{\ra}{\rightarrow}
\newcommand{\f}{\frac}
\newcommand{\g}{\gamma}
\newcommand{\be}{\begin{equation}}
\renewcommand{\ra}{\rightarrow}
\newcommand{\ee}{\end{equation}}
\newcommand{\bea}{\begin{eqnarray}}
\newcommand{\eea}{\end{eqnarray}}
\newcommand{\bna}{\begin{eqnarray*}}
\newcommand{\ena}{\end{eqnarray*}}
\renewcommand{\le}{\left}
\newcommand{\ri}{\right}
\journal{***}
\begin{document}

\begin{frontmatter}

\title{Global existence and multiplicity of solutions for logarithmic Schr\"{o}dinger equations on graphs}

\author[qnu]{Mengqiu Shao\corref{Shao}}
\ead{shaomamath@163.com}
\address[qnu]{School of Mathematical Sciences, Qufu Normal University, Shandong, 273165, China}

\cortext[Shao]{Corresponding author.}

\begin{abstract}
We consider the following logarithmic Schr\"{o}dinger equation
  $$
  -\Delta u+h(x)u=u\log u^{2}
  $$
on a locally finite graph $G=(V,E)$, where $\Delta$ is a discrete Laplacian operator on the graph, $h$ is the potential function. Different from the classical methods in Euclidean space, we obtain the existence of global solutions to the equation by using the variational method from local to global, which is inspired by the works of Lin and Yang in \cite{LinYang}. In addition, when the potential function $h$ is sign-changing, we prove that the equation admits infinitely many solutions with high energy by using the symmetric mountain pass theorem. We extend the classical results in Euclidean space to discrete graphs.
\end{abstract}

\begin{keyword} logarithmic Schr\"{o}dinger equation; global solutions; multiple solutions

\MSC[2020] 35A15, 35Q55, 58E30
\end{keyword}

\end{frontmatter}

\section{Introduction}

In recent years, the partial differential equation on graphs has become a hot topic in the field of discrete geometry and nonlinear analysis.  For example, Lin and Wu \cite{Lin1} studied the existence and nonexistence of global solutions for a semilinear heat equation on graphs. Hua, Keller, Schwarz and Wirth \cite{HuaKeller} investigated the eigenvalue growth of infinite graphs with
discrete spectrum. Cushing, Kamtue, Liu and Peyerimhoff \cite{DSLN} reformulated the Bakry-\'{E}mery curvature on a weighted graph. Moreover, the existence and asymptotic behavior of solutions to elliptic equations have been widely studied on graphs, see \cite{HanShao, HanShaoZhao} for more details. In particular, Lin and Yang \cite{LinYang} considered various equations on locally finite graphs and established a variational method from local to global on graphs. More recently, Chang, Wang and Yan \cite{ChangYan1} first investigated the existence of ground state solutions of a Schr\"{o}dinger equation with logarithmic nonlinearity on locally finite graphs. Later, Chang et al. \cite{ChangYan2} considered the existence of least energy sign-changing solutions of a logarithmic Schr\"{o}dinger equation on locally finite graphs. The above works arouse our great research interest. In this paper, we are focus on the following logarithmic Schr\"{o}dinger equation
\begin{align}\label{log}
-\Delta u+h(x)u=u\log u^{2}
\end{align}
on a locally finite graph $G=(V,E)$, where $\Delta$ is the discrete Laplacian on graphs and $h$ is the potential function satisfying certain conditions. On the Euclidean space, equation \eqref{log} is closely related to the time-dependent logarithmic Schr\"{o}dinger equation
\begin{equation*}
  i\frac{\partial u}{\partial t}-\Delta u+h(x)u-u\log u^2=0\ \ \hbox{in}\ \ \mathbb{R}^{N}\times\mathbb{R}_{+},
\end{equation*}
which has important applications in quantum mechanics, quantum optics, nuclear physics, transport and diffusion
phenomena, open quantum systems, effective quantum gravity, theory of superfluidity, and Bose-Einstein condensation (see \cite{IJ,TC,WangZhang,KG} for more details).

When $V=\mathbb{R}^N$, the equation \eqref{log} is formally associated with the energy functional $J: W^{1,2}(\mathbb{R}^{N})\rightarrow \mathbb{R}\cup\{+\infty\}$ defined by
\begin{equation*}
 J(u)=\frac{1}{2}\int_{\mathbb{R}^{N}}|\nabla u|^2dx+\frac{1}{2}\int_{\mathbb{R}^{N}}(h(x)+1)u ^2dx-\frac{1}{2}\int_{\mathbb{R}^{N}}u^{2}\log u^{2}dx.
\end{equation*}
However,  in general, this functional fails to be finite and $C^1$ smooth on $W^{1,2}(\mathbb{R}^{N})$ because there exists $u\in W^{1,2}(\mathbb{R}^{N})$ such that $\int_{\mathbb{R}^{N}}u^{2}\log u^{2}dx=-\infty$. Thus the classical critical point theory cannot be applied for $J$. To overcome this obstacle, several approaches have been developed on the Euclidean space. For instance, Cazenave \cite{TC} worked in an Orlicz space endowed with Luxemburg type norm in order to make the associated functional well defined and  $C^1$ smooth. By applying non-smooth critical point theory for lower semi-continuous functionals, Squassina and Szulkin \cite{MA} proved the existence of multiple solutions to a logarithmic Schr\"{o}dinger equation with periodic potential. By using the penalization technique, Tanaka and Zhang \cite{KC} obtained infinitely many multi-bump geometrically distinct solutions for a logarithmic Schr\"{o}dinger equation.  For a more non-smooth variational framework to the logarithmic Schr\"{o}dinger equation,  we refer the reader to\cite{AH,Alves1,Alves2,FengTangZhang,PJ,Shuai}.

It is worth pointing out that the general form of the equation (\ref{log}) is the following Schr\"{o}dinger equation
\begin{equation}\label{sch}
  -\Delta u+h(x)u=f(x,u),\ \ x\in V.
\end{equation}
On a lattice graphs, when the potential function $h$ is periodic or bounded, Hua and Xu \cite{HuaXu} proved the existence of ground state solutions for the equation \eqref{sch} by using the Nehari manifold method.
When $h(x)=\lambda a(x)+1$ and $f(x,u)=|u|^{p-1}u$, $p\geq 2$, Zhang and Zhao \cite{ZhangZhao} proved the existence and convergence of ground state solutions to the equation \eqref{sch} on locally finite graphs.
Under the following Ambrosetti-Rabinowitz (or (A-R) for short) condition
\vskip4pt
\noindent $(f_{1})$ \emph{there exists a constant $\alpha>2$ such that for all $x\in V$ and $s\in \mathbb{R}\setminus\{0\}$,}
$$0<\alpha F(x,s)=\alpha\int^{s}_{0}f(x,t)dt\leq sf(x,s)$$
\vskip4pt
\noindent and some growth conditions, Grigor'yan, Lin and Yang \cite{GLY3} obtained that the equation (\ref{sch}) admits a positive solution on graphs, where the (A-R) condition $(f_{1})$ plays an important role in proving the boundedness of $(PS)$ sequence.

However, the logarithmic nonlinearity $u\log u^2$  in the equation (\ref{log}) not satisfies the  (A-R) condition,  which brings some difficulties when we use the variational method to study such equations on graphs. On the other hand, similarly to the case in Euclidean space, the corresponding energy functional of the equation \eqref{log} is not well defined on $W^{1,2}(V)$. Particularly, Chang, Wang and Yang \cite{ChangYan1} gave the following example
$$u(x)=\le\{\begin{array}{lll}
(|x|\log|x|)^{-1},& |x|\geq3,\\[1.5ex]
0,& |x|\leq2
\end{array}\ri.$$
and verified $u\in W^{1,2}(V)$ but $\int_{V}u^{2}\log u^{2}d\mu=-\infty$, where $x_0\in V$ is some fixed point and $|x|:=\rho(x,x_0)$ denotes the distance between $x$ and $x_0$ on graphs. To deal with this limitation, they restricted $u^2\log u^2\in L^1(V)$ and applied the direction derivative and the Nehari manifold method to obtain the existence of ground state solutions of the equation \eqref{log} in \cite{ChangYan1}.

Inspired by the above works,  this paper  is devoted to study the existence and multiplicity of solutions to the logarithmic Schr\"{o}dinger equation \eqref{log} on the locally finite graph $G=(V,E)$. Specifically, our first aim is to obtain the existence of global solutions to the equation \eqref{log} by using the variational method from local to global, which is different from the classical methods in Euclidean space and the analysis techniques in \cite{ChangYan1}. Roughly speaking, we first get sequences of local solutions $u_k$ for equation \eqref{log} on $W^{1,2}(B_k)$ and derive uniform estimates for those local solution sequences. Then we obtain the global solutions by extracting convergent sequence of local solutions. We have to point out that one of the difficulties is to prove the nontrivial property of the global solutions. On the other hand, under the assumption that the potential function $h$ allows changing sign, we also prove that the equation \eqref{log} admits infinitely many solutions with high energy by using the symmetric mountain pass theorem. To the best of our knowledge, there seem no results concerned with infinitely many solutions to such equations on graphs.

To describe our main results in details, we first give some notations and assumptions.
Throughout this paper, we always assume that $G=(V,E)$ is a connected locally finite graph and its measure $\mu:V\rightarrow \mathbb{R}^{+}$ is a finite positive function satisfying
\begin{equation}\label{mu}
  \mu(x)\geq\mu_{min}>0\ \  \hbox{for\ \  all}\ \  x\in V.
\end{equation}
For an edge $xy\in E$, the weight $\omega_{xy}$ is positive and symmetric, namely
\begin{equation}\label{omega}
 \omega_{xy}=\omega_{yx}>0.
\end{equation}
The distance $\rho(x,y)$ of two vertices $x,y\in V$ is defined by the minimal number
of edges which connect these two vertices. For any function $u:V\rightarrow \mathbb{R}$ and $x\in V$, the Laplacian  of $u$ at $x$ is defined by
$$\Delta u(x):=\frac{1}{\mu(x)}\underset{y\sim x}\sum \omega_{xy}(u(y)-u(x)).$$
For any fixed $x\in V$, the gradient of $u$ at $x$ is given by
$$
\nabla u(x)=\left(\sqrt{\frac{\omega_{xy_1}}{2\mu(x)}}(u(y_1)-u(x)),\cdots,\sqrt{\f{\omega_{xy_{\ell_x}}}{2\mu(x)}}(u(y_{\ell_x})-u(x))\right),
$$
where $\{y_1,\cdots,y_{\ell_x}\}$ is the set of all neighbors of $x$ for some positive integer $\ell_x$. Then $\nabla u(x)$ can be seen as a vector in $\mathbb{R}^{\ell_x}$.
 $\nabla u\nabla v $ at $x\in V$ is denoted by
$$\nabla u\nabla v (x)=\frac{1}{2\mu(x)}\underset{y\sim x}\sum\omega_{xy}(u(y)-u(x))(v(y)-v(x)).$$
The length of the gradient for $u$ is denoted by
$$|\nabla u(x)|=\left(\frac{1}{2\mu(x)}\underset{y\sim x}\sum\omega_{xy}(u(y)-u(x))^{2}\right)^{\frac{1}{2}}.$$
For any function $u: V\rightarrow\mathbb{R}$, an integral of $u$ over $V$ is defined by
$$\int_{V}ud\mu=\underset{x\in V}\sum\mu (x)u(x).$$

For any $1\leq s<\infty$, we denote by
$$L^{s}(V):=\left\{u:V\rightarrow\mathbb{R}:\int_{V}|u|^{s}d\mu<\infty\right\}$$
the set of integrable functions on $V$ with the respect to the measure $\mu$. For $s=\infty$, let
\begin{equation*}
 L^{\infty}(V):=\left\{u:V\rightarrow\mathbb{R}:\underset{x\in V}\sup|u(x)|<\infty\right\}.
\end{equation*}
Usually, we use $\|\cdot\|_{s}$ to denote the $L^s(V)$ norm. Define
\begin{equation*}
W^{1,2}(V):=\left\{u:V\rightarrow\mathbb{R}: \int_{V}(|\nabla u|^{2}+u^{2})d\mu<+\infty\right\}
\end{equation*}
with the norm
$$\|u\|_{W^{1,2}(V)}:=\left(\int_{V}(|\nabla u|^{2}+u^{2})d\mu\right)^{\frac{1}{2}}.$$
Clearly, $W^{1,2}(V)$ is a Hilbert space with the inner product
$$(u,v)_{W^{1,2}(V)}=\int_{V}(\nabla u\nabla v+uv)d\mu,\ \ \forall u,v\in W^{1,2}(V).$$
Let $h(x)\geq h_0> 0$ for all $x\in V$, we define a subspace
of $W^{1,2}(V)$, which is also a Hilbert space, namely
\begin{equation}\label{H}
  \mathcal{H}:=\{u\in W^{1,2}(V):\int_{V} h(x)u^{2}d\mu<+\infty\}
\end{equation}
with the norm
\begin{equation*}
  \|u\|_{\mathcal{H}}:=\left(\int_{V}(|\nabla u|^{2}+ h(x)u^{2})d\mu\right)^{\frac{1}{2}}.
\end{equation*}

Our first result is as follows.

\begin{theorem}\label{existence1}
Let $G=(V,E)$ be a connected locally finite graph satisfying \eqref{mu} and \eqref{omega}. Assume that $h$ satisfies the following conditions:
\begin{enumerate}
  \item  [($h_{1}$)] there exists a constant $h_0>0$ such that $h(x)\geq h_0$ for all $x\in V$;
  \item  [($h_{2}$)] $h^{-1}\in L^{1}(V)$.
\end{enumerate}
Then the equation \eqref{log} possesses a nontrivial solution.
\end{theorem}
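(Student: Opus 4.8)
The plan is to realise the local-to-global scheme by solving a family of finite-dimensional problems on the balls $B_k=\{x\in V:\rho(x,x_0)\le k\}$ and then passing to the limit. For each $k$, the space of functions supported in $B_k$ is finite-dimensional, and on it the energy functional
$$
J(u)=\frac12\int_V(|\nabla u|^2+h(x)u^2)\,d\mu+\frac12\int_V u^2\,d\mu-\frac12\int_V u^2\log u^2\,d\mu
$$
is well defined and $C^1$ (the map $s\mapsto s^2\log s^2$ extends continuously by $0$ at $s=0$), so I would write $J_k$ for its restriction and seek critical points of $J_k$ via the mountain pass theorem.

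First I would check that $J_k$ has the mountain pass geometry, with constants uniform in $k$. Since $G$ is locally finite with $\mu\ge\mu_{min}$, one has the pointwise bound $\|u\|_\infty\le\mu_{min}^{-1/2}\|u\|_2$ and the embeddings $\mathcal H\hookrightarrow L^s(V)$ for $s\ge2$ with constants independent of $k$. Splitting $\int_V u^2\log u^2\,d\mu$ over $\{|u|\le1\}$ and $\{|u|>1\}$ and using $\log u^2\le C_\varepsilon|u|^{\varepsilon}$ on the latter set, I obtain $J(u)\ge\frac12\|u\|_{\mathcal H}^2-C\|u\|_{\mathcal H}^{2+\varepsilon}$, so there are $\rho,\alpha>0$, the same for every $k$, with $J(u)\ge\alpha$ whenever $\|u\|_{\mathcal H}=\rho$. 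For the other end, fixing a nonzero $e$ supported in $B_1$, the scaling $J(te)=\frac{t^2}2 C-t^2\log t\int_V e^2\,d\mu$ shows $J(te)\to-\infty$, giving a common descent direction. Any (PS) sequence for $J_k$ is bounded, because the identity $J_k(v)-\frac12\langle J_k'(v),v\rangle=\frac12\|v\|_2^2$ (with $J_k'(v)\to0$) controls $\|v\|_2$, which together with the $L^\infty$ bound controls $\|v\|_{\mathcal H}$; in finite dimensions this yields a convergent subsequence, so $J_k$ satisfies (PS), and the mountain pass theorem produces $u_k$ with $J_k'(u_k)=0$ and $c_k:=J_k(u_k)\in[\alpha,\max_{t\ge0}J(te)]$.

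Next I would extract uniform bounds and pass to the limit. From $J_k'(u_k)=0$ the same identity gives $\|u_k\|_2^2=2c_k$, uniformly bounded, and the pointwise bound yields $\log u_k^2\le C$, whence $\|u_k\|_{\mathcal H}^2=\int_V u_k^2\log u_k^2\,d\mu\le C\|u_k\|_2^2$ is bounded. Thus, after extending by zero, $u_k\rightharpoonup u$ weakly in $\mathcal H$ along a subsequence, and since evaluation at a vertex is a bounded functional on $\mathcal H$, $u_k(x)\to u(x)$ for every $x\in V$. Testing $J_k'(u_k)=0$ against any finitely supported $\phi$ and letting $k\to\infty$ (all sums are finite and $s\mapsto s\log s^2$ is continuous) gives $\langle J'(u),\phi\rangle=0$; choosing $\phi$ to be the indicator of a single vertex shows that $u$ solves \eqref{log} pointwise at every $x\in V$.

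The hard part is the nontriviality of $u$, i.e. ruling out loss of mass at infinity, and this is exactly where $(h_2)$ enters. Under $h^{-1}\in L^1(V)$ the embedding $\mathcal H\hookrightarrow L^2(V)$ is compact, which I would establish through the tail estimate $\int_{B_R^c}u^2\,d\mu\le\|u\|_\infty\,\|h^{-1}\|_{L^1(B_R^c)}^{1/2}\,\|u\|_{\mathcal H}$ (Cauchy--Schwarz with weight $h$), whose right-hand side is uniformly small over bounded sets of $\mathcal H$ as $R\to\infty$. Compactness upgrades the weak convergence to $u_k\to u$ in $L^2(V)$, so $\|u\|_2^2=\lim_k\|u_k\|_2^2=\lim_k 2c_k\ge2\alpha>0$, forcing $u\ne0$. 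I expect the two delicate points to be the uniform lower bound $c_k\ge\alpha$ on the mountain pass levels and the compact embedding furnished by $(h_2)$, since together they are precisely what keeps the limiting mass positive and prevents the global solution from degenerating to the trivial one.
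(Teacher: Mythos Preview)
Your proposal is correct and follows essentially the same local-to-global mountain pass strategy as the paper: solve the Dirichlet problem on $B_k$ via the mountain pass theorem with geometry constants independent of $k$, obtain uniform $\mathcal H$-bounds on the critical points $u_k$, pass to the limit pointwise (equivalently, weakly in $\mathcal H$), and use the compact embedding furnished by $(h_2)$ to rule out vanishing. Your nontriviality argument is in fact a bit more direct than the paper's: you exploit the identity $J_k(u_k)-\tfrac12\langle J_k'(u_k),u_k\rangle=\tfrac12\|u_k\|_2^2$ to get $\|u_k\|_2^2=2c_k\ge 2\alpha$ and then invoke compactness of $\mathcal H\hookrightarrow L^2(V)$ to conclude $\|u\|_2^2\ge 2\alpha>0$, whereas the paper instead derives a uniform lower bound on $\|u_k\|_{\mathcal H(B_k)}$, transfers it to a lower bound on $\int_V u_k^2\log u_k^2\,d\mu$, and passes to the limit via dominated convergence in $L^{2\pm\varepsilon}(V)$.
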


\begin{remark}
\emph{The assumptions $(h_1)$ and $(h_2)$ were originally introduced on graphs by Grigor'yan, Lin and Yang in  \cite{GLY3} which is used to guarantee the compactness of the embedding $\mathcal{H}\hookrightarrow L^s(V)$, $s\in [1,+\infty]$. Especially, this embedding theorem plays an important role in proving the nontrivial property of the global solutions.}
\end{remark}

Moreover, we consider the existence of infinitely many solutions to the equation \eqref{log} when $h$ satisfies the following  assumptions.
\begin{enumerate}
\item  [($h'_{1}$)] \emph{$\underset{x\in V}\inf h(x)\geq h_{1}$ for some constant $h_{1}\in(-1,0)$.}
  \item  [($h'_{2}$)]\emph{ There exists $\alpha>0$ such that $h^{-1}\in L^{1}(V\setminus V_{\alpha})$, where $V_{\alpha}=\{x\in V:h(x)\leq \alpha\}$ and the volume of $V_{\alpha}$ is finite, i.e.}
      \begin{equation*}
        Vol(V_{\alpha})=\underset{x\in V_{\alpha}}\sum\mu(x)<\infty.
      \end{equation*}
\end{enumerate}
Note that the equation \eqref{log} is formally associated with the energy functional $\mathcal{J}: W^{1,2}(V)\rightarrow \mathbb{R}\cup\{+\infty\}$ defined by
\begin{equation}\label{functionalJ}
\mathcal{J}(u)=\frac{1}{2}\int_{V}|\nabla u|^2d\mu+\frac{1}{2}\int_{V}(h(x)+1)u^2d\mu-\frac{1}{2}\int_{V}u^{2}\log u^{2}d\mu.
\end{equation}
To study the  multiplicity of solutions to the equation \eqref{log}, we consider a function space
\begin{equation}\label{fW}
 \mathcal{W}:=\{u\in W^{1,2}(V): \int_{V}(h(x)+1)u^2d\mu<+\infty\}
\end{equation}
with the norm
$$\|u\|_{\mathcal{W}}:= \left(\int_{V}[|\nabla u|^2+(h(x)+1)u^2]d\mu\right)^{\frac{1}{2}},$$
where $h$ satisfies $(h'_{1})$ and $(h'_{2})$.
It follows from Lemma~\ref{smooth} in Section 3 that $\mathcal{J}\in C^{1}(\mathcal{W},\mathbb{R})$.

\vskip4pt
Our second result is as follows.

\begin{theorem}\label{existence2}
Let $G=(V,E)$ be a connected locally finite graph satisfying \eqref{mu} and \eqref{omega}. Suppose that $(h'_{1})$ and $(h'_{2})$  hold. Then the equation \eqref{log} admits a sequence of solutions $\{u_n\}$ with $\mathcal{J}(u_n)\rightarrow+\infty$ as $n\rightarrow\infty$.
\end{theorem}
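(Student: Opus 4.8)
The plan is to realize Theorem~\ref{existence2} as an application of the symmetric mountain pass (fountain) theorem to the functional $\mathcal{J}$ on the Hilbert space $\mathcal{W}$. First I would record the structural facts: by Lemma~\ref{smooth} we have $\mathcal{J}\in C^1(\mathcal{W},\mathbb{R})$, the functional is even because $u^2\log u^2$ and the quadratic terms are even, and $\mathcal{J}(0)=0$. Since $(h'_1)$ gives $h+1\geq 1+h_1>0$, the form $(u,v)\mapsto\int_V(\nabla u\nabla v+(h+1)uv)d\mu$ is a genuine inner product, so $\mathcal{W}$ is a separable Hilbert space; fixing an orthonormal basis $\{e_j\}$ I set $X_j=\mathbb{R}e_j$, $Y_k=\bigoplus_{j\leq k}X_j$ and $Z_k=\overline{\bigoplus_{j\geq k}X_j}$. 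The indispensable analytic input is the compactness of the embedding $\mathcal{W}\hookrightarrow L^s(V)$ for $s$ in a suitable range, which under the sign-changing hypotheses $(h'_1)$--$(h'_2)$ has to be established (in the spirit of the embedding recalled for $\mathcal{H}$ after Theorem~\ref{existence1}): the finite volume of $V_\alpha$ controls the region where $h$ is small or negative, while $h^{-1}\in L^1(V\setminus V_\alpha)$ controls the tail. In particular, combining these with $h+1\geq 1+h_1$ yields $(h+1)^{-1}\in L^1(V)$ and hence $\mathcal{W}\hookrightarrow L^1(V)$. Critical points of $\mathcal{J}$ in $\mathcal{W}$ are weak, hence pointwise, solutions of \eqref{log}.

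The heart of the proof is the Palais--Smale condition, which is delicate precisely because $u\log u^2$ violates the Ambrosetti--Rabinowitz condition $(f_1)$. Let $\{u_n\}\subset\mathcal{W}$ satisfy $\mathcal{J}(u_n)\to c$ and $\mathcal{J}'(u_n)\to0$. A direct computation gives $\mathcal{J}'(u)[u]=\|u\|_{\mathcal{W}}^2-\int_V u^2 d\mu-\int_V u^2\log u^2 d\mu$, so that
\begin{equation*}
\|u_n\|_2^2=2\mathcal{J}(u_n)-\mathcal{J}'(u_n)[u_n]\leq 2|c|+1+o(1)\|u_n\|_{\mathcal{W}}
\end{equation*}
for $n$ large. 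To bound $\|u_n\|_{\mathcal{W}}$ I would use $\|u_n\|_{\mathcal{W}}^2=2\mathcal{J}(u_n)+\int_V u_n^2\log u_n^2 d\mu$ and discard the nonpositive contribution of $\{|u_n|\leq1\}$; on $\{|u_n|>1\}$ the elementary bound $\log t^2\leq C_\epsilon t^\epsilon$ together with the graph inequality $\|u\|_{2+\epsilon}\leq C\|u\|_2$ (valid since $\mu\geq\mu_{min}$) yields $\int_V u_n^2\log u_n^2 d\mu\leq C_\epsilon\|u_n\|_2^{2+\epsilon}$. Feeding in the $L^2$ estimate gives $\|u_n\|_{\mathcal{W}}^2\leq C(1+\|u_n\|_{\mathcal{W}}^{1+\epsilon/2})$ for small $\epsilon$, forcing $\{u_n\}$ to be bounded. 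Passing to a subsequence with $u_n\rightharpoonup u$ in $\mathcal{W}$ and $u_n\to u$ in $L^s(V)$, the nonlinear term $\int_V u_n(\log u_n^2+1)(u_n-u)d\mu\to0$ (using the compact embedding and the pointwise bound $|s\log s^2|\leq C(|s|^{1-\epsilon}+|s|^{1+\epsilon})$), while $(u,u_n-u)_{\mathcal{W}}\to0$ by weak convergence; testing $\mathcal{J}'(u_n)$ against $u_n-u$ then gives $\|u_n-u\|_{\mathcal{W}}^2\to0$, i.e. strong convergence, so $(PS)_c$ holds for every $c$.

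It remains to verify the two geometric conditions. For the linking condition on $Z_k$, set $\beta_k=\sup\{\|u\|_{2+\epsilon}:u\in Z_k,\ \|u\|_{\mathcal{W}}=1\}$; compactness of $\mathcal{W}\hookrightarrow L^{2+\epsilon}(V)$ gives $\beta_k\to0$. Discarding the nonnegative small-value part again, for $u\in Z_k$ I obtain $\mathcal{J}(u)\geq\tfrac12\|u\|_{\mathcal{W}}^2-C_\epsilon\beta_k^{2+\epsilon}\|u\|_{\mathcal{W}}^{2+\epsilon}$; choosing $r_k$ as the maximizer of the right-hand side (which tends to $+\infty$ because $\beta_k\to0$) shows $b_k:=\inf_{u\in Z_k,\ \|u\|_{\mathcal{W}}=r_k}\mathcal{J}(u)\to+\infty$. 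For the condition on the finite-dimensional $Y_k$, I write $u=t\phi$ with $\|\phi\|_{\mathcal{W}}=1$ and compute $\int_V u^2\log u^2 d\mu=t^2\log t^2\,\|\phi\|_2^2+t^2\int_V\phi^2\log\phi^2 d\mu$; since the unit sphere of $Y_k$ is compact and $\|\phi\|_2^2\geq c_k>0$ there, this superquadratic growth dominates $\tfrac12 t^2$ uniformly, so $\mathcal{J}(t\phi)\to-\infty$ uniformly and $\mathcal{J}\leq0$ on a large sphere $\|u\|_{\mathcal{W}}=\rho_k>r_k$ of $Y_k$. With $\mathcal{J}$ even, $C^1$, satisfying $(PS)$ and these two conditions, the symmetric mountain pass theorem produces critical values $c_k\geq b_k\to+\infty$, yielding solutions $\{u_n\}$ of \eqref{log} with $\mathcal{J}(u_n)\to+\infty$.

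The main obstacle is the verification of $(PS)$: the logarithmic nonlinearity is not covered by $(f_1)$, so the usual $\mathcal{J}(u_n)-\tfrac1\alpha\mathcal{J}'(u_n)[u_n]$ argument degenerates (here it returns only $\|u_n\|_2^2$), and boundedness has to be extracted from the interplay of the $L^2$ identity, the slow growth of $\log$, and the graph $L^p$-nesting inequalities. A secondary difficulty, feeding into both $(PS)$ and the condition on $Z_k$, is the compact embedding of $\mathcal{W}$ when $h$ is allowed to change sign, where the finite volume of $V_\alpha$ is exactly what compensates for the loss of coercivity of $h$ itself.
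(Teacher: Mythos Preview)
Your argument is correct, but it is more elaborate than what the paper actually does. The paper applies Rabinowitz's symmetric mountain pass theorem (Lemma~\ref{sym}) in its simplest form, with the trivial splitting $Y=\{0\}$, $Z=\mathcal{W}$: the lower bound in condition~(i) is verified on a single small sphere in the \emph{entire} space via $\mathcal{J}(u)\geq\tfrac12\|u\|_{\mathcal{W}}^2-\tfrac12 C_\varepsilon C\|u\|_{\mathcal{W}}^{2+\varepsilon}$, with no need for the fountain-theorem machinery of $Z_k$-subspaces and the $\beta_k\to0$ argument. Your route via the fountain theorem is valid and yields the same unbounded sequence of critical values, but the extra decomposition buys nothing here since the mountain-pass geometry already holds globally on small spheres. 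A second, minor, difference is in the $(PS)$ boundedness step: you bound $\int u_n^2\log u_n^2\,d\mu\leq C_\varepsilon\|u_n\|_2^{2+\varepsilon}$ directly from the discrete $\ell^p$-nesting $\|u\|_{2+\varepsilon}\leq C\|u\|_2$ (valid because $\mu\geq\mu_{\min}$), whereas the paper splits via H\"older as $\|u_n\|_{2+\varepsilon}^{2+\varepsilon}\leq\|u_n\|_2\|u_n\|_{2(1+\varepsilon)}^{1+\varepsilon}$, then uses the Sobolev embedding $\|\cdot\|_{2(1+\varepsilon)}\leq C\|\cdot\|_{\mathcal{W}}$ and Young's inequality. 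Your estimate is a bit cleaner and more directly exploits the discrete setting; the paper's is closer to the Euclidean template. Both approaches rely on the compact embedding of Lemma~\ref{e} (which the paper quotes from \cite{ChangYan1}) for the strong-convergence part of $(PS)$ and, in your version, for $\beta_k\to0$.
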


\begin{remark}
\emph{It should be pointed out that the hypotheses $(h'_{1})$ and $(h'_{2})$ were proposed by Chang, Wang and Yang in \cite{ChangYan1}, where they got the existence of ground state solutions to the equation \eqref{log} on graphs. Different from their results in \cite{ChangYan1}, by using the symmetric mountain pass theorem,  we obtain infinitely many solutions with high energy of the equation \eqref{log}. On the other hand, we extend the classical results in Euclidean space to discrete graphs.}
\end{remark}

This article is organized as follows.  In Sect. 2, we prove the existence of the nontrivial solutions of equation \eqref{log} from local to global. In Sect. 3, we prove Theorem~\ref{existence2} by using the symmetric mountain pass theorem.

\section{Existence of global solutions }
In this section, we prove Theorem \ref{existence1} by using variational method from local to global.  Firstly, we present some preliminaries and basic functional settings. Let $\Omega$ be a connected finite  subset of $V$.
The boundary of $\Omega$ in $V$ is defined by
\begin{equation}\label{boundary}
  \partial\Omega:=\{y\notin\Omega: \exists x\in\Omega\ \ \hbox{such that} \ \ xy\in E\}.
\end{equation}
The space $\mathcal{H}(\Omega):=W^{1,2}_{0}(\Omega)$  endowed with the inner product
$$(u,v)_{\mathcal{H}(\Omega)}=\left(\int_{\overline{\Omega}}|\nabla u|^{2}d\mu+\int_{\Omega}h(x)u^{2}d\mu\right)^{\frac{1}{2}}$$
is a Hilbert space, where $\overline{\Omega}=\Omega\cup\partial \Omega$.
Fixed some point $O\in V$. For any $x\in V$, $\rho(x)=\rho(x,O)$ denotes the distance between $x$ and $O$. For any integer $k\geq 1$, we denote a ball centered at $O$ with radius $k$ by
$$B_{k}=B_{k}(O)=\{x\in V:\rho(x)<k\}.$$
The boundary of $B_{k}$ written as
$$\partial B_{k}=\{x\in V:\rho (x)=k\}.$$
It is easy to see that if $\Omega=B_k$, then $\partial\Omega$ defined in \eqref{boundary} is equal to $\partial B_{k}$.

Now we consider the following locally equation
\begin{align}\label{locally}
\begin{cases}
-\Delta u+h(x)u=u\log u^{2}, &\text{in}\ \ B_{k},\\
u=0, &\text{on}\ \ \partial B_{k},
\end{cases}
\end{align}
where $h$ satisfies $(h_1)$ and $(h_2)$.
It is suitable to study \eqref{locally} in the space $\mathcal{H}(B_{k})$ under the norm
$$\|u\|_{\mathcal{H}(B_{k})}=\left(\int_{\overline{B}_{k}}|\nabla u|^{2}d\mu+\int_{B_{k}}h(x)u^{2}d\mu\right)^{\frac{1}{2}}.$$
The functional related to \eqref{locally} is
\begin{equation*}
\mathcal{I}_{k}(u)=\frac{1}{2}\|u\|^2_{\mathcal{H}(B_{k})}+\frac{1}{2}\int_{B_{k}}u^2d\mu-\frac{1}{2}\int_{B_{k}}u^{2}\log u^{2}d\mu.
\end{equation*}
Note that for any $0<\varepsilon<1$, there exists $C_{\varepsilon}>0$ such that
\begin{equation}\label{inequality1}
  |t^2\log t^2|\leq C_{\varepsilon}(|t|^{2-\varepsilon}+|t|^{2+\varepsilon}),\ \ \forall t\in \mathbb{R}\backslash\{0\}.
\end{equation}
By \eqref{inequality1} and a standard argument, we have $\mathcal{I}_{k}\in C^{1}(\mathcal{H}(B_{k}),\mathbb{R})$, and
\begin{equation*}
\langle \mathcal{I}'_{k}(u),v\rangle=\int_{\overline{B}_{k}}\nabla u\nabla vd\mu+\int_{B_{k}}h(x)uvd\mu-\int_{B_{k}}u^{2}\log u^{2}d\mu.
\end{equation*}

Next, we present a result about the compactness of the Sobolev space $\mathcal{H}$ defined by \eqref{H}.
\begin{lemma}\label{e1}
Let $G=(V,E)$ be a connected locally finite graph satisfying \eqref{mu} and \eqref{omega}. Suppose that $(h_{1})$ and $(h_{2})$ hold.  Then $\mathcal{H}$ is weakly pre-compact and $\mathcal{H}$ is compactly embedded into $L^{q}(V)$ for any $q\in [1,+\infty]$.
\end{lemma}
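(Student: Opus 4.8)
The plan is to prove the two assertions of Lemma~\ref{e1} in sequence, relying crucially on the conditions $(h_1)$ and $(h_2)$. First I would establish \emph{weak pre-compactness}, which amounts to showing that every bounded sequence in $\mathcal{H}$ has a weakly convergent subsequence. Since $\mathcal{H}$ is a Hilbert space, this is essentially the statement that $\mathcal{H}$ is reflexive; for a Hilbert space this follows automatically, but the point to verify carefully is that $\mathcal{H}$ is \emph{complete}, i.e. genuinely a Hilbert space under $\|\cdot\|_{\mathcal{H}}$. To this end I would take a Cauchy sequence $\{u_n\}$ in $\mathcal{H}$, observe that by $(h_1)$ we have $\|u\|_{W^{1,2}(V)}^2 \le \max\{1,h_0^{-1}\}\|u\|_{\mathcal{H}}^2$ so that convergence in $\mathcal{H}$ controls convergence in $W^{1,2}(V)$, extract a $W^{1,2}(V)$-limit $u$, and then use Fatou's lemma on the discrete sum $\int_V h(x)u^2\,d\mu$ to confirm $u\in\mathcal{H}$ and $u_n\to u$ in $\mathcal{H}$.

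The heart of the lemma is the \emph{compact embedding} $\mathcal{H}\hookrightarrow L^q(V)$ for all $q\in[1,+\infty]$. Here the key quantitative input is $(h_2)$: since $h^{-1}\in L^1(V)$, the tail of $V$ where $h$ is large has small measure, and this forces mass concentration to stay in a finite region. Concretely, I would first record the pointwise bound coming from $(h_1)$: for every $x\in V$,
\begin{equation*}
\mu_{\min}\, u(x)^2 \le \mu(x)\,u(x)^2 \le \frac{1}{h_0}\int_V h(x)u^2\,d\mu \le \frac{1}{h_0}\|u\|_{\mathcal{H}}^2,
\end{equation*}
which yields a uniform $L^\infty$ bound $\|u\|_\infty \le C\|u\|_{\mathcal{H}}$ and in particular handles the endpoint $q=\infty$. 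This $L^\infty$ control, together with an interpolation argument, reduces the general $q$ case to establishing compactness into $L^2(V)$ (or $L^1(V)$).

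For the $L^2$ compactness I would take a bounded sequence $\{u_n\}\subset\mathcal{H}$, pass to a weakly convergent subsequence $u_n\rightharpoonup u$ from the first part, and show the convergence is strong in $L^2(V)$. The strategy is a tail estimate: given $\varepsilon>0$, use $(h_2)$ to choose a finite set $A\subset V$ so that $\int_{V\setminus A} h^{-1}\,d\mu<\varepsilon$; then on the tail,
\begin{equation*}
\int_{V\setminus A}(u_n-u)^2\,d\mu \le \Big(\sup_V (u_n-u)^2\,h\Big)\int_{V\setminus A}h^{-1}\,d\mu \le C\,\varepsilon,
\end{equation*}
where I bound the weighted sup by the uniform $\mathcal{H}$-bound. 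On the finite set $A$ the embedding into $L^2(A)$ is compact because $A$ is finite (pointwise convergence on finitely many vertices gives $L^2$ convergence there, using that weak convergence in $\mathcal{H}$ implies pointwise convergence). Combining the finite part and the tail gives strong $L^2$ convergence, hence compactness. The main obstacle I anticipate is making the tail estimate fully rigorous and uniform in $n$: one must be careful that the weighted-sup bound $\sup_V (u_n-u)^2 h$ is genuinely controlled, which again traces back to the pointwise $L^\infty$ bound above, so I would organize the proof so that the $L^\infty$ estimate is in place before attempting the tail splitting.
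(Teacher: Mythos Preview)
The paper does not actually prove this lemma: it simply states that the proof is similar to Lemma~2.6 in \cite{ZhangZhao} and omits it. So there is no in-paper argument to compare against; your outline in fact supplies what the paper leaves out.

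Your plan is essentially correct and follows the standard route (which is also what \cite{ZhangZhao} and \cite{GLY3} do): completeness of $\mathcal{H}$ gives reflexivity and hence weak pre-compactness; the pointwise estimate $h_0\mu_{\min}u(x)^2\le \int_V h u^2\,d\mu$ yields the continuous embedding into $L^\infty(V)$; and a tail-splitting argument using $(h_2)$ upgrades weak convergence to strong $L^q$ convergence. One small point to sharpen: in your tail estimate you invoke a bound on $\sup_V (u_n-u)^2 h$ and say this ``traces back to the $L^\infty$ bound above''. The $L^\infty$ bound alone controls $(u_n-u)^2$ but not $(u_n-u)^2 h$, since $h$ is typically unbounded. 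What you actually need is the analogous pointwise inequality
\[
\mu_{\min}\, h(x)\,v(x)^2 \;\le\; \mu(x)\,h(x)\,v(x)^2 \;\le\; \int_V h\,v^2\,d\mu \;\le\; \|v\|_{\mathcal{H}}^2,
\]
applied to $v=u_n$ and $v=u$ separately; this gives $\sup_V h(u_n-u)^2\le C$ uniformly in $n$. Alternatively, and perhaps more cleanly, you can bypass the weighted-sup issue by controlling the $L^1$ tail directly via Cauchy--Schwarz,
\[
\int_{V\setminus A}|u_n|\,d\mu \;\le\; \Big(\int_{V\setminus A} h^{-1}\,d\mu\Big)^{1/2}\Big(\int_V h\,u_n^2\,d\mu\Big)^{1/2},
\]
and then interpolate with the uniform $L^\infty$ bound to reach any $q\in[1,\infty]$. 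Either variant closes the argument.
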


\begin{proof}
The proof of this lemma is similar to Lemma 2.6 in \cite{ZhangZhao}, so we omit it.
\end{proof}

The following lemma gives the compactness of the space $\mathcal{H}(\Omega)$. In addition, we prove an embedding inequality where the embedding constant is independent of $\Omega$. For brevity, we denote the $L^{q}$ norms on $\Omega$ by  $\|\cdot\|_{q,\Omega}$.

\begin{lemma}\label{Sobolevembedding}
 Let $G=(V,E)$ be a connected locally finite graph satisfying \eqref{mu} and \eqref{omega}, $\Omega$ be a connected finite subset of $V$. Assume that $(h_{1})$ hold. Then $\mathcal{H}(\Omega)$ is compactly embedded into $L^{q}(\Omega)$ for any $q\in [2,+\infty]$. In particular, there exists a constant $C$ depending only on $h_0$, $\mu_{\min}$ and $q$ such that for any $u\in \mathcal{H}(\Omega)$,
\begin{equation}\label{Sin}
\|u\|_{q,\Omega}\leq C\|u\|_{\mathcal{H}(\Omega)}.
\end{equation}
Moreover, $\mathcal{H}(\Omega)$ is  pre-compact.
\end{lemma}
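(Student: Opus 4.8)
The plan is to exploit the finiteness of $\Omega$ for the qualitative assertions and a pair of elementary pointwise estimates for the quantitative inequality \eqref{Sin}. Since $\Omega$ is a finite set, every function in $\mathcal{H}(\Omega)=W^{1,2}_0(\Omega)$ is determined by its finitely many values on $\overline{\Omega}$, so $\mathcal{H}(\Omega)$ is finite-dimensional. On a finite-dimensional normed space all norms are equivalent and closed bounded sets are compact; hence the inclusion $\mathcal{H}(\Omega)\hookrightarrow L^{q}(\Omega)$ is automatically continuous and compact for every $q\in[2,+\infty]$, and $\mathcal{H}(\Omega)$ is pre-compact. Thus the only substantive point is to produce an embedding constant $C$ that is independent of $\Omega$ and depends only on $h_0$, $\mu_{\min}$ and $q$.

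First I would settle the endpoint $q=\infty$. Fix $x_0\in\Omega$. The single term $\mu(x_0)h(x_0)u(x_0)^2$ is dominated by the whole sum $\int_{\Omega}h u^2\,d\mu$, and since $\mu(x_0)\geq\mu_{\min}$ by \eqref{mu} and $h(x_0)\geq h_0$ by $(h_1)$, I obtain
$$\mu_{\min}h_0\,u(x_0)^2\leq \int_{\Omega}h(x)u^2\,d\mu\leq \|u\|_{\mathcal{H}(\Omega)}^2,$$
so that $\|u\|_{\infty,\Omega}\leq (\mu_{\min}h_0)^{-1/2}\|u\|_{\mathcal{H}(\Omega)}$. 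Next, for $q=2$, I use $(h_1)$ directly in the form $h_0 u^2\leq h(x)u^2$ and integrate to get $\|u\|_{2,\Omega}^2\leq h_0^{-1}\|u\|_{\mathcal{H}(\Omega)}^2$. Crucially, both constants depend only on $h_0$ and $\mu_{\min}$, never on $\Omega$.

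To cover the intermediate range $q\in(2,+\infty)$ I would interpolate between these two endpoints at the pointwise level. Writing $|u(x)|^{q}=|u(x)|^{q-2}|u(x)|^{2}\leq \|u\|_{\infty,\Omega}^{q-2}u(x)^2$ and summing against $\mu$ yields
$$\|u\|_{q,\Omega}^{q}\leq \|u\|_{\infty,\Omega}^{q-2}\,\|u\|_{2,\Omega}^{2}\leq (\mu_{\min}h_0)^{-(q-2)/2}\,h_0^{-1}\,\|u\|_{\mathcal{H}(\Omega)}^{q},$$
and taking the $q$-th root gives \eqref{Sin} with $C=\bigl((\mu_{\min}h_0)^{-(q-2)/2}h_0^{-1}\bigr)^{1/q}$, which depends only on $h_0$, $\mu_{\min}$ and $q$.

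I do not expect a genuine obstacle: the compactness and pre-compactness are soft consequences of finite-dimensionality, and the inequality reduces to the two one-line pointwise bounds above. The only matter that requires care is the bookkeeping of constants so that the $\Omega$-independence is transparent, the crucial structural fact being that $h_0$ and $\mu_{\min}$ are global constants of the graph rather than quantities attached to any particular $\Omega$. A minor caveat is that the norm $\|\cdot\|_{\mathcal{H}(\Omega)}$ integrates $|\nabla u|^2$ over $\overline{\Omega}$, whereas the $L^2$ and $L^\infty$ estimates use only the $\int_{\Omega}h u^2$ part; the gradient term is nonnegative and may simply be discarded when deriving \eqref{Sin}, so it poses no difficulty.
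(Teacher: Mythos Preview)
Your proof is correct and follows essentially the same route as the paper: finite-dimensionality of $\mathcal{H}(\Omega)$ yields the compactness and pre-compactness, the pointwise bound $\mu_{\min}h_0 u(x_0)^2\leq\int_\Omega h u^2\,d\mu$ gives the $L^\infty$ estimate, and the interpolation $|u|^q\leq\|u\|_\infty^{q-2}|u|^2$ handles $q\in[2,\infty)$. Even the explicit constant you obtain, $(\mu_{\min}h_0)^{-(q-2)/2}h_0^{-1}$, coincides with the paper's $(h_0)^{-q/2}(\mu_{\min})^{(2-q)/2}$.
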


\begin{proof} The proof of this lemma is inspired by  Theorem~7 in \cite{GLY3}. Since $\Omega$ is a connected finite subset of $V$, $\mathcal{H}(\Omega)$ is a finite dimensional space. Hence, $\mathcal{H}(\Omega)$ is  pre-compact. We are left to prove (\ref{Sin}). Indeed, for any $u\in \mathcal{H}(\Omega)$ and vertex $x_{0}\in \Omega$, we have
\begin{align*}
\|u\|^{2}_{\mathcal{H}(\Omega)}&=\int_{\overline{\Omega}}|\nabla u|^{2}d\mu+\int_{\Omega}h(x)u^{2}d\mu\\
&\geq \int_{\Omega}h(x)u^{2}d\mu\\
&=\underset{x\in \Omega}\sum h(x)u(x)^{2}\mu(x)\\
&\geq h_0\mu_{\min}u(x_{0})^{2},
\end{align*}
which gives
\begin{equation*}
u(x_{0})\leq\left(\frac{1}{h_0\mu_{\min}}\right)^{\frac{1}{2}}\|u\|_{\mathcal{H}(\Omega)}.
\end{equation*}
Then
\begin{equation*}
 \|u\|_{\infty,\Omega}\leq\left(\frac{1}{h_0\mu_{\min}}\right)^{\frac{1}{2}}\|u\|_{\mathcal{H}(\Omega)},
\end{equation*}
and thus for any $2\leq q<+\infty$,
\begin{align*}
   \int_{\Omega}|u|^{q}d\mu&=\int_{\Omega}|u|^{2}|u|^{q-2}d\mu\\
 &\leq\left(\frac{1}{h_0\mu_{\min}}\right)^{\frac{q-2}{2}}\|u\|^{q-2}_{\mathcal{H}(\Omega)}\int_{\Omega}u^2d\mu\\
 &\leq(h_0)^{\frac{-q}{2}}(\mu_{\min})^{\frac{2-q}{2}}\|u\|^{q-2}_{\mathcal{H}(\Omega)}\int_{\Omega}h(x)u^2d\mu\\
 &\leq(h_0)^{\frac{-q}{2}}(\mu_{\min})^{\frac{2-q}{2}}\|u\|^{q}_{\mathcal{H}(\Omega)}.
\end{align*}
Therefore, for any $2\leq q\leq+\infty$,
\begin{equation*}
 \|u\|_{q,\Omega}\leq C\|u\|_{H(\Omega)},
\end{equation*}
where $C$ depends only on $h_0$, $\mu_{\min}$ and $q$.
\end{proof}

Now, we list two lemmas about Green's formulas on graphs, which are fundamental when we use methods in calculus of variations.

\begin{lemma}[{\cite[Lemma~2.1]{ZhangZhao}}] \label{a}
 Let $G=(V,E)$ be a connected locally finite graph satisfying \eqref{mu} and \eqref{omega}. Then for any $v\in C_{c}(V)$, we have
\begin{equation*}\label{partw12v}
\int_{V}\nabla u\nabla vd\mu=-\int_{V}(\Delta u)vd\mu,
\end{equation*}
where $C_c(V)$ is the set of all functions with finite supports in $V$.
\end{lemma}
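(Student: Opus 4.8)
The plan is to verify the identity by direct computation, unwinding both sides into sums over ordered pairs of adjacent vertices and exploiting the symmetry $\omega_{xy}=\omega_{yx}$ to fold one side onto the other. First I would expand the left-hand side using the definitions of the integral and of $\nabla u\nabla v$: since $\int_{V}\nabla u\nabla v\,d\mu=\sum_{x\in V}\mu(x)\,\nabla u\nabla v(x)$ and $\nabla u\nabla v(x)=\frac{1}{2\mu(x)}\sum_{y\sim x}\omega_{xy}(u(y)-u(x))(v(y)-v(x))$, the factors $\mu(x)$ cancel, yielding
\[
\int_{V}\nabla u\nabla v\,d\mu=\frac12\sum_{x\in V}\sum_{y\sim x}\omega_{xy}(u(y)-u(x))(v(y)-v(x)),
\]
a double sum ranging over all ordered adjacent pairs $(x,y)$.

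The key preliminary observation is that this double sum is in fact finite, which is where the hypothesis $v\in C_{c}(V)$ enters. Writing $K=\mathrm{supp}\,v$ (a finite set), a summand can be nonzero only if $v(x)\neq 0$ or $v(y)\neq 0$, i.e.\ only if $x\in K$ or $y\in K$. Since $K$ is finite and $G$ is locally finite, each vertex of $K$ has finitely many neighbors, so only finitely many pairs $(x,y)$ contribute. Hence the sum converges absolutely and I may rearrange and relabel its terms freely.

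Next I would expand the product and split the expression as $A-B$, where $A=\frac12\sum_{x}\sum_{y\sim x}\omega_{xy}(u(y)-u(x))v(y)$ and $B=\frac12\sum_{x}\sum_{y\sim x}\omega_{xy}(u(y)-u(x))v(x)$. Relabeling $x\leftrightarrow y$ in $A$ and using $\omega_{yx}=\omega_{xy}$ together with $u(x)-u(y)=-(u(y)-u(x))$ gives $A=-B$, so the whole expression collapses to $-2B=-\sum_{x}\sum_{y\sim x}\omega_{xy}(u(y)-u(x))v(x)$. Finally, reinserting $\mu(x)/\mu(x)$ and recognizing $\frac{1}{\mu(x)}\sum_{y\sim x}\omega_{xy}(u(y)-u(x))=\Delta u(x)$ identifies this with $-\sum_{x}\mu(x)(\Delta u)(x)v(x)=-\int_{V}(\Delta u)v\,d\mu$, which is exactly the claimed identity.

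The computation itself is routine; the only point that genuinely requires care, and the sole place the compact support of $v$ is used, is the justification of the rearrangement. On an infinite locally finite graph the double sum need not converge for arbitrary $v$, so one must confirm finiteness (equivalently, absolute convergence) before relabeling indices, and the finite support of $v$ together with local finiteness of $G$ supplies precisely this.
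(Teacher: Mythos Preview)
Your argument is correct: the direct expansion of both sides into finite double sums over ordered adjacent pairs, followed by the relabeling $x\leftrightarrow y$ using $\omega_{xy}=\omega_{yx}$, is the standard proof of this discrete integration-by-parts formula, and you have correctly identified that the compact support of $v$ together with local finiteness is exactly what makes the sums finite and hence freely rearrangeable.

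As for comparison with the paper: there is nothing to compare against, because the paper does not supply its own proof of this lemma. It is simply quoted from \cite[Lemma~2.1]{ZhangZhao} without argument. Your write-up is in fact more detailed than what one typically finds in the literature for this identity, where the computation is often left to the reader or dispatched in a single line.
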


\begin{lemma}[{\cite[Lemma~2.2]{ZhangZhao}}]\label{b}
 Let $G=(V,E)$ be a connected locally finite graph satisfying \eqref{mu} and \eqref{omega}, $\Omega$ be a connected finite subset of $V$ and $u\in W_0^{1,2}(\Omega)$. Then for any $v\in C_{c}(\Omega)$, we have
\begin{equation*}\label{partw12omega}
\int_{\overline{\Omega}}\nabla u\nabla vd\mu=-\int_{\Omega}(\Delta u)vd\mu,
\end{equation*}
where $C_{c}(\Omega)$ denotes the set of all functions $u: \Omega\rightarrow\mathbb{R}$ satisfying
$\emph{\hbox{supp}}\  u\subset\Omega$ and $u=0$ on $\partial\Omega$.
\end{lemma}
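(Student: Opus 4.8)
The plan is to prove the identity by a discrete summation-by-parts argument, carefully exploiting the compact support of $v$ to reconcile the integration domain $\overline{\Omega}$ appearing on the left with $\Omega$ appearing on the right. First I would unfold the left-hand side using the definition of $\nabla u\nabla v$ and of the integral over $\overline{\Omega}$:
\[
\int_{\overline{\Omega}}\nabla u\nabla v\, d\mu=\frac{1}{2}\sum_{x\in\overline{\Omega}}\sum_{y\sim x}\omega_{xy}\bigl(u(y)-u(x)\bigr)\bigl(v(y)-v(x)\bigr).
\]
Since $\Omega$ is finite, $v\in C_{c}(\Omega)$ has finite support, and $G$ is locally finite, only finitely many edges contribute, so every rearrangement performed below is legitimate.

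The first key step is to replace the outer summation over $\overline{\Omega}$ by a summation over all of $V$ without changing the value. Indeed, if $x\in V\setminus\overline{\Omega}$, then $x\notin\Omega$ and $x\notin\partial\Omega$; by the definition of $\partial\Omega$ in \eqref{boundary}, no neighbor $y$ of such an $x$ can lie in $\Omega$ (otherwise $x$ would itself belong to $\partial\Omega$). Hence $v(x)=0$ and $v(y)=0$ for every $y\sim x$, so the factor $v(y)-v(x)$ vanishes and the associated terms drop out. This is precisely the point at which $v=0$ on $\partial\Omega$ and outside $\Omega$ enters the argument.

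With the sum now running over all of $V$, I would either invoke Lemma~\ref{a} directly, after extending $v$ by zero to $V$ so that $v\in C_{c}(V)$, obtaining $\int_{\overline{\Omega}}\nabla u\nabla v\, d\mu=\int_{V}\nabla u\nabla v\, d\mu=-\int_{V}(\Delta u)v\, d\mu$; or reproduce the symmetrization by hand. For the hand computation, I would split $v(y)-v(x)$ into two pieces and, in the piece carrying $v(y)$, swap the dummy labels $x\leftrightarrow y$; using the symmetry $\omega_{xy}=\omega_{yx}$ from \eqref{omega} this piece becomes the negative of the piece carrying $v(x)$, which collapses the factor $\tfrac{1}{2}$ and yields
\[
\int_{\overline{\Omega}}\nabla u\nabla v\, d\mu=-\sum_{x\in V}v(x)\sum_{y\sim x}\omega_{xy}\bigl(u(y)-u(x)\bigr)=-\sum_{x\in V}\mu(x)\,v(x)\,\Delta u(x).
\]
Because $v$ is supported in $\Omega$, the final sum reduces to $-\int_{\Omega}(\Delta u)v\, d\mu$, which is the claimed identity.

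The step I expect to be the main obstacle is the boundary bookkeeping: one must verify that passing to the sum over all of $V$ (equivalently, that the contributions indexed by $V\setminus\overline{\Omega}$ vanish) is valid, and that in computing $\Delta u(x)$ for $x\in\Omega$ only the values of $u$ on $\overline{\Omega}$ enter, so that the identity is insensitive to any extension of $u$ outside $\overline{\Omega}$. This is guaranteed because every neighbor of a vertex of $\Omega$ lies in $\overline{\Omega}$, where $u\in W_{0}^{1,2}(\Omega)$ is defined and vanishes on $\partial\Omega$; the remaining verifications are the routine symmetry manipulation and the identification of $\sum_{y\sim x}\omega_{xy}(u(y)-u(x))$ with $\mu(x)\Delta u(x)$.
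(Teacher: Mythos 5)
Your proof is correct. Note that the paper itself offers no argument for this lemma: it is quoted verbatim from Zhang--Zhao \cite[Lemma~2.2]{ZhangZhao}, so there is no in-text proof to diverge from, and your summation-by-parts computation is precisely the standard argument behind that citation. The two points that actually require care are the ones you isolate: (a) a vertex $x\in V\setminus\overline{\Omega}$ has no neighbor in $\Omega$ (else $x\in\partial\Omega$ by \eqref{boundary}), so after extending $u$ and $v$ by zero the terms indexed by such $x$ carry the vanishing factor $v(y)-v(x)$, which justifies enlarging the outer sum from $\overline{\Omega}$ to $V$ and also shows the left-hand side is independent of the extension; and (b) every neighbor of a vertex of $\Omega$ lies in $\overline{\Omega}$, so $\Delta u(x)$ for $x\in\Omega$ only sees values of $u$ where $u\in W_0^{1,2}(\Omega)$ is defined. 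With these in place, your symmetrization step --- splitting $v(y)-v(x)$, swapping the dummy indices in the $v(y)$ piece, and using $\omega_{xy}=\omega_{yx}$ from \eqref{omega} to collapse the factor $\tfrac12$ --- is legitimate because only finitely many edges contribute ($\Omega$ finite, $G$ locally finite), and either of your two routes (invoking Lemma~\ref{a} for the zero-extension of $v\in C_c(V)$, or the hand computation) closes the argument. No gaps.
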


Next,  we verify that the functional $\mathcal{I}_k$ satisfies the mountain pass geometry
in the following two lemmas.

\begin{lemma}\label{mp1}
   There exist constants $\delta>0$ and $r>0$ satisfying
  $\mathcal{I}_k(u)\geq \delta$ for all $u$ with $\|u\|_{\mathcal{H}(B_{k})}=r$.
  \end{lemma}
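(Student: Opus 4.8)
The plan is to bound $\mathcal{I}_k$ from below on the sphere $\|u\|_{\mathcal{H}(B_k)}=r$ by combining the sign structure of the logarithmic term with the $k$-independent embedding inequality \eqref{Sin}. First I would discard the manifestly nonnegative term $\frac{1}{2}\int_{B_k}u^2d\mu$, reducing to
$$
\mathcal{I}_k(u)\geq \frac{1}{2}\|u\|^2_{\mathcal{H}(B_k)}-\frac{1}{2}\int_{B_k}u^2\log u^2\,d\mu .
$$
Everything then comes down to producing an upper bound of the form $\|u\|^{2+\varepsilon}_{\mathcal{H}(B_k)}$ for the integral $\int_{B_k}u^2\log u^2\,d\mu$.

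The key observation is that the integrand $u^2\log u^2$ is nonpositive on $\{|u|\leq 1\}$. Splitting the integral accordingly and dropping the nonpositive part gives
$$
\int_{B_k}u^2\log u^2\,d\mu\leq \int_{B_k\cap\{|u|>1\}}u^2\log u^2\,d\mu .
$$
On $\{|u|>1\}$ one has $|u|^{2-\varepsilon}\leq |u|^{2+\varepsilon}$, so inequality \eqref{inequality1} yields $u^2\log u^2\leq 2C_\varepsilon|u|^{2+\varepsilon}$ there; enlarging the domain back to $B_k$ produces
$$
\int_{B_k}u^2\log u^2\,d\mu\leq 2C_\varepsilon\int_{B_k}|u|^{2+\varepsilon}\,d\mu=2C_\varepsilon\|u\|^{2+\varepsilon}_{2+\varepsilon,B_k}.
$$
Since $2+\varepsilon\geq 2$, I can now invoke Lemma~\ref{Sobolevembedding} to obtain $\|u\|_{2+\varepsilon,B_k}\leq C\|u\|_{\mathcal{H}(B_k)}$ with $C$ independent of $k$, hence $\int_{B_k}u^2\log u^2\,d\mu\leq 2C_\varepsilon C^{2+\varepsilon}\|u\|^{2+\varepsilon}_{\mathcal{H}(B_k)}$.

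Combining these, and writing $t=\|u\|_{\mathcal{H}(B_k)}$, I arrive at $\mathcal{I}_k(u)\geq \frac{1}{2}t^2-C_\varepsilon C^{2+\varepsilon}t^{2+\varepsilon}$. Choosing $r>0$ small enough that $C_\varepsilon C^{2+\varepsilon}r^\varepsilon\leq \frac{1}{4}$ forces $\mathcal{I}_k(u)\geq \frac{1}{4}r^2=:\delta>0$ whenever $\|u\|_{\mathcal{H}(B_k)}=r$. Because the embedding constant $C$ in \eqref{Sin} does not depend on $\Omega=B_k$, both $r$ and $\delta$ may be chosen uniformly in $k$, which is precisely what the later local-to-global argument needs.

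The main obstacle is the term $|u|^{2-\varepsilon}$ in \eqref{inequality1}: its exponent is below $2$, so it cannot be controlled directly by the embedding $\mathcal{H}(\Omega)\hookrightarrow L^q(\Omega)$, which is available only for $q\geq 2$. The resolution is exactly the sign argument above — restricting the estimate to $\{|u|>1\}$ removes the subquadratic contribution and leaves only the superquadratic power $|u|^{2+\varepsilon}$, which the embedding does control. The remaining care is simply to keep track that every constant entering the final choice of $r$ is independent of $k$.
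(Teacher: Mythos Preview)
Your proof is correct and follows essentially the same route as the paper: discard the nonnegative $\tfrac{1}{2}\int_{B_k}u^2\,d\mu$, restrict the logarithmic integral to $\{|u|>1\}$ where it is nonnegative and dominated by $C_\varepsilon|u|^{2+\varepsilon}$, apply the $k$-independent embedding \eqref{Sin}, and choose $r$ so that the superquadratic term absorbs at most half of $\tfrac{1}{2}r^2$. Your remark that the resulting $r$ and $\delta$ are uniform in $k$ is exactly the point the paper relies on downstream.
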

\begin{proof}
It follows from \eqref{inequality1} and Lemma~\ref{Sobolevembedding} that
\begin{align*}
\mathcal{I}_{k}(u)&=\frac{1}{2}\|u\|^2_{\mathcal{H}(B_{k})}+\frac{1}{2}\int_{B_{k}}u^2d\mu-\frac{1}{2}\int_{B_{k}}u^{2}\log u^{2}d\mu\\
&\geq\frac{1}{2}\|u\|^2_{\mathcal{H}(B_{k})}-\frac{1}{2}\int_{\{x\in B_{k}:u(x)\geq 1\}}u^2\log u^{2}d\mu\\
&\geq\frac{1}{2}\|u\|^2_{\mathcal{H}(B_{k})}-\frac{1}{2}\int_{B_{k}}C_{\varepsilon}|u|^{2+\varepsilon}d\mu\\
&\geq\frac{1}{2}\|u\|^2_{\mathcal{H}(B_{k})}-\frac{C_{\varepsilon}}{2}\|u\|^{2+\varepsilon}_{2+\varepsilon,B_{k}}\\
&\geq\frac{1}{2}\|u\|^2_{\mathcal{H}(B_{k})}-
\frac{C_{\varepsilon}}{2}C\|u\|^{2+\varepsilon}_{\mathcal{H}(B_{k})}\\
&=\frac{1}{2}\|u\|^2_{\mathcal{H}(B_{k})}\left(1-C_{\varepsilon}C\|u\|^{\varepsilon}_{\mathcal{H}(B_{k})}\right),
\end{align*}
where $C$ depends on $h_0$, $\mu_{\min}$ and $\varepsilon$.
Setting $r=\left(\frac{1}{2C_{\varepsilon}C}\right)^{\frac{1}{\varepsilon}}$, we have
  $\mathcal{I}_k(u)\geq \frac{1}{4}r^2$ for all $u$ with $\|u\|_{\mathcal{H}(B_{k})}=r$. This gives the desired result.
\end{proof}

\begin{lemma}\label{mp2}
There exists some $u\in \mathcal{H}(B_{k})$ with $u(x)\geq 0$ for all $x\in B_{k}$ such that $\mathcal{I}_k(tu)\ra-\infty$ as $t\ra+\infty$.
\end{lemma}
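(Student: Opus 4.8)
The plan is to choose a single fixed nonnegative test function and exploit the exact scaling of the logarithmic term, which becomes transparent because $B_k$ is finite. First I would fix any $u\in\mathcal{H}(B_{k})$ with $u(x)\geq 0$ for all $x\in B_k$ and $u\not\equiv 0$; such a function exists since $B_k$ is a nonempty finite set, so that $\mathcal{H}(B_{k})=W^{1,2}_0(B_k)$ is finite dimensional and one may simply take, for instance, a nonnegative function supported at the center $O$. Because $B_k$ is finite, every integral over $B_k$ is a finite sum, hence the three quantities
$$A:=\|u\|^2_{\mathcal{H}(B_{k})},\qquad B:=\int_{B_{k}}u^2d\mu,\qquad D:=\int_{B_{k}}u^2\log u^2d\mu$$
are all finite real numbers, and moreover $B>0$ since $u$ is nonnegative and nontrivial.

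Next I would compute $\mathcal{I}_k(tu)$ for $t>0$ by pulling out the factor $t$. Using $\|tu\|^2_{\mathcal{H}(B_{k})}=t^2A$ together with the identity $(tu)^2\log\big((tu)^2\big)=t^2u^2\log t^2+t^2u^2\log u^2$, a direct substitution into the definition of $\mathcal{I}_k$ yields
$$\mathcal{I}_k(tu)=\frac{t^2}{2}\,(A+B-D)-\frac{t^2\log t^2}{2}\,B.$$
From this explicit expression the conclusion is immediate: as $t\to+\infty$ we have $\log t^2\to+\infty$, so the term $-\tfrac12 B\, t^2\log t^2$ tends to $-\infty$ and dominates the purely quadratic term $\tfrac12(A+B-D)t^2$. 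Hence $\mathcal{I}_k(tu)\to-\infty$, which is exactly the claim, and the $u$ chosen above is nonnegative on $B_k$ as required.

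There is essentially no substantial obstacle here: the finiteness of $B_k$ removes all the integrability subtleties that make the logarithmic functional delicate on the whole graph, so every term in the scaling computation is automatically finite and the identity above is exact rather than merely an estimate. The only point requiring a moment's care is to guarantee $B>0$, which is precisely why the test function must be taken nontrivial; this ensures that the coefficient $-\tfrac12 B$ of the dominant $t^2\log t^2$ term is strictly negative and drives the functional to $-\infty$.
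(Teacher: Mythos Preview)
Your proof is correct and follows essentially the same approach as the paper: the paper also picks a nonnegative test function (specifically the indicator of a single vertex $x_0\in B_k$) and observes that in $\mathcal{I}_k(tu)$ the term $-\tfrac{t^2}{2}\log t^2\int_{B_k}u^2\,d\mu$ dominates the quadratic part as $t\to+\infty$. Your version is slightly more general in that you allow any nonnegative nontrivial $u\in\mathcal{H}(B_k)$ and isolate the scaling identity $\mathcal{I}_k(tu)=\tfrac{t^2}{2}(A+B-D)-\tfrac{t^2\log t^2}{2}B$ explicitly, but the mechanism is identical.
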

\begin{proof}
For any fixed point $x_0\in B_{k}$, we set
$$u(x)=\le\{\begin{array}{lll}
1&{\rm when}& x=x_0\\[1.5ex]
0&{\rm when}& x\not=x_0.
\end{array}\ri.$$
Then we have
\begin{equation*}
\mathcal{I}_{k}(tu)=\f{t^2}{2}\left(\sum_{y\sim x_0}\mu(y)|\nabla u|^2(y)+\mu(x_0)|\nabla u|^2(x_0)\right)+\f{t^2}{2}\mu(x_0)(h(x_0)+1)-\frac{t^2}{2}\log t^{2}\mu(x_{0})\ra-\infty
    \end{equation*}
as $t\ra+\infty$.
\end{proof}

Next, we prove that $\mathcal{I}_{k}$ satisfies the $(PS)_{c}$ condition for any $c\in \mathbb{R}$.

\begin{lemma}\label{ps-cond} For any $c\in \mathbb{R}$, if $\{u_n\}\subset \mathcal{H}(B_{k})$ satisfies $\mathcal{I}_{k}(u_n)\ra c$ and $\mathcal{I}_{k}'(u_n)\ra 0$, then there exists some $u\in\mathcal{H}(B_{k})$ such that, up to a subsequence, $u_n\ra u$ in $\mathcal{H}(B_{k})$.
\end{lemma}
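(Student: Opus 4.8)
The plan is to prove the $(PS)_c$ condition by establishing boundedness of the sequence $\{u_n\}$ in $\mathcal{H}(B_k)$, then using the finite-dimensionality of $\mathcal{H}(B_k)$ (guaranteed by Lemma~\ref{Sobolevembedding}) to extract a strongly convergent subsequence. Since $\Omega = B_k$ is a connected finite subset of $V$, the space $\mathcal{H}(B_k)$ is finite dimensional, so weak convergence will automatically upgrade to strong convergence once I know the sequence is bounded. Thus the entire difficulty collapses to a single task: proving that a $(PS)_c$ sequence is bounded.

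To establish boundedness, I would first write down the standard combination $\mathcal{I}_k(u_n) - \frac{1}{2}\langle \mathcal{I}_k'(u_n), u_n\rangle$. Computing this explicitly, the $\|u_n\|^2_{\mathcal{H}(B_k)}$ terms and the $\int_{B_k} u_n^2\,d\mu$ terms, together with the $-\frac{1}{2}\int_{B_k} u_n^2\log u_n^2\,d\mu$ terms, should largely cancel. Concretely, $\mathcal{I}_k(u_n) = \frac{1}{2}\|u_n\|^2 + \frac{1}{2}\int u_n^2 - \frac{1}{2}\int u_n^2\log u_n^2$, while $\langle \mathcal{I}_k'(u_n), u_n\rangle = \|u_n\|^2 - \int u_n^2\log u_n^2$ (note the functional derivative term $\int u_n^2\log u_n^2\,d\mu$ comes from differentiating; one must be careful that the logarithmic term in $\langle \mathcal{I}_k'(u_n),u_n\rangle$ is $\int u_n^2\log u_n^2\,d\mu$, not $\int u_n^2(\log u_n^2 + 2)\,d\mu$ — the precise form follows from the derivative of $\frac{1}{2}t^2\log t^2$, which is $t\log t^2 + t$). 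Taking the right linear combination, the $\int u_n^2\log u_n^2$ contributions should cancel, leaving a clean bound of the form $\frac{1}{2}\int_{B_k} u_n^2\,d\mu \leq \mathcal{I}_k(u_n) - \frac{1}{2}\langle \mathcal{I}_k'(u_n), u_n\rangle$, and then using that $\mathcal{I}_k(u_n) \to c$ and $\mathcal{I}_k'(u_n)\to 0$ (so $|\langle \mathcal{I}_k'(u_n), u_n\rangle| \leq \|u_n\|$ for large $n$), I would obtain an a priori bound on $\int_{B_k} u_n^2\,d\mu$ in terms of $\|u_n\|_{\mathcal{H}(B_k)}$.

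The main obstacle will be converting the bound on the $L^2$-norm into a bound on the full $\mathcal{H}(B_k)$-norm, since the logarithmic nonlinearity is exactly what makes the $(A\text{-}R)$ condition fail and prevents the naive estimate. The key point is to exploit the inequality \eqref{inequality1}: for the logarithmic term I split $\int_{B_k} u_n^2\log u_n^2\,d\mu$ over the regions $\{|u_n|\geq 1\}$ and $\{|u_n| < 1\}$, controlling the positive part by $C_\varepsilon \|u_n\|^{2+\varepsilon}_{2+\varepsilon,B_k}$ and using Lemma~\ref{Sobolevembedding} to bound this by $\|u_n\|_{\mathcal{H}(B_k)}^{2+\varepsilon}$. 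Combined with the definition $\mathcal{I}_k(u_n) = \frac{1}{2}\|u_n\|^2_{\mathcal{H}(B_k)} + \frac{1}{2}\int u_n^2 - \frac{1}{2}\int u_n^2\log u_n^2$ and the already-established $L^2$ control, I expect to close the argument by an absorption inequality of Young type that handles the sublinear-in-norm error terms, forcing $\|u_n\|_{\mathcal{H}(B_k)}$ to remain bounded.

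Once boundedness holds, finite dimensionality of $\mathcal{H}(B_k)$ yields a subsequence $u_n \to u$ strongly, and continuity of $\mathcal{I}_k'$ gives $\mathcal{I}_k'(u) = 0$; thus the $(PS)_c$ condition is verified for every $c\in\mathbb{R}$.
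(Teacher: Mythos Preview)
Your overall strategy---bound the $(PS)_c$ sequence, then invoke finite dimensionality of $\mathcal{H}(B_k)$---matches the paper's, and the identity $\mathcal{I}_k(u_n)-\tfrac{1}{2}\langle\mathcal{I}_k'(u_n),u_n\rangle=\tfrac{1}{2}\|u_n\|_{2,B_k}^2$ is exactly the paper's first step. The gap is in how you pass from $L^2$ control to $\mathcal{H}(B_k)$-boundedness. You propose to bound $\int_{\{|u_n|\geq 1\}}u_n^2\log u_n^2\,d\mu\leq C_\varepsilon\|u_n\|_{2+\varepsilon,B_k}^{2+\varepsilon}$ and then apply Lemma~\ref{Sobolevembedding} directly to get $\leq C'\|u_n\|_{\mathcal{H}(B_k)}^{2+\varepsilon}$. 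Feeding this into either $\mathcal{I}_k(u_n)\to c$ or $\langle\mathcal{I}_k'(u_n),u_n\rangle\to 0$ produces an inequality of the shape
\[
\|u_n\|_{\mathcal{H}(B_k)}^2\ \leq\ C + C'\|u_n\|_{\mathcal{H}(B_k)}^{2+\varepsilon} + o(\|u_n\|_{\mathcal{H}(B_k)}),
\]
and since $2+\varepsilon>2$ the right-hand side cannot be absorbed into the left. The ``sublinear-in-norm error terms'' you expect are in fact superlinear with this estimate, so the argument does not close.

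The paper fixes this by inserting a H\"older step \emph{before} Sobolev:
\[
\int_{B_k}|u_n|^{2+\varepsilon}\,d\mu\ \leq\ \|u_n\|_{2,B_k}\,\|u_n\|_{2(1+\varepsilon),B_k}^{1+\varepsilon}\ \leq\ C\,\|u_n\|_{2,B_k}\,\|u_n\|_{\mathcal{H}(B_k)}^{1+\varepsilon},
\]
so that the $\mathcal{H}(B_k)$-exponent drops to $1+\varepsilon<2$. Young's inequality then yields $\int_{B_k}u_n^2\log u_n^2\,d\mu\leq \tfrac{1+\varepsilon}{2}\|u_n\|_{\mathcal{H}(B_k)}^2 + C''\|u_n\|_{2,B_k}^{2/(1-\varepsilon)}$, and the first term is absorbed while the second is controlled by the $L^2$ bound already obtained. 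This H\"older-before-Sobolev splitting is the missing idea. You could instead observe that on the finite-dimensional space $\mathcal{H}(B_k)$ all norms are equivalent, so $L^2$-boundedness alone suffices for this lemma; however, the equivalence constant would depend on $k$, and the paper explicitly reuses the $k$-independent estimate \eqref{bdd3} in the proof of Theorem~\ref{existence1}, so the sharper route is what is actually needed downstream.
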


\begin{proof}
Note that $\{u_n\}\subset \mathcal{H}(B_{k})$ satisfies
\begin{equation}\label{I}
   \mathcal{I}_{k}(u_n)\ra c\ \ \hbox{and}\ \ \mathcal{I}_{k}'(u_n)\ra 0\ \ \hbox{as}\ \ n\ra\infty.
\end{equation}
We claim that $\{u_{n}\}$ is bounded in $\mathcal{H}(B_{k})$. In fact, by \eqref{I} we have
\begin{equation}\label{bdd1}
 \underset{n\rightarrow\infty}\lim \mathcal{I}_{k}(u_n)=\underset{n\rightarrow\infty}\lim \left(\mathcal{I}_{k}(u_n)-\frac{1}{2}\langle\mathcal{I}'_{k}(u_n),u_{n}\rangle\right)
=\frac{1}{2}\underset{n\rightarrow\infty}\lim \int_{B_{k}}u^2_{n}d\mu
=\frac{1}{2}\underset{n\rightarrow\infty}\lim \|u_{n}\|^2_{2,B_{k}}
=c.
\end{equation}
On the other hand, by the H\"{o}lder's inequality, for any $\varepsilon\in (0,1)$, there exist $C_{\varepsilon},C_{1}>0$ such that

\begin{align}\label{bdd2}
\int_{B_{k}}u^2_n\log u_n^2d\mu&\leq\int_{\{x\in B_{k}:u_n(x)\geq 1\}}u^2_n\log u_n^2d\mu\nonumber\\
&\leq C_{\varepsilon}\int_{B_{k}}|u_{n}|^{2+\varepsilon}d\mu\nonumber\\
&\leq C_{\varepsilon}\left(\int_{B_{k}}|u_{n}|^{2}d\mu\right)^{\frac{1}{2}}
\left(\int_{B_{k}}|u_{n}|^{2(1+\varepsilon)}d\mu\right)^{\frac{1}{2}}\nonumber\\
&=C_{\varepsilon}\|u_{n}\|_{2,B_{k}}\|u_{n}\|^{1+\varepsilon}_{2(1+\varepsilon),B_k}\nonumber\\
&\leq C_{\varepsilon}C_{1}\|u_{n}\|_{2,B_{k}}\|u_{n}\|^{1+\varepsilon}_{\mathcal{H}(B_{k})}\nonumber\\
&\leq\frac{1+\varepsilon}{2}\|u_{n}\|^{2}_{\mathcal{H}(B_{k})}+
\frac{1-\varepsilon}{2}C_{\varepsilon}C_{1}\|u_{n}\|^{\frac{2}{1-\varepsilon}}_{2,B_{k}},
\end{align}
where $C_1$ depends on $h_0$, $\mu_{\min}$, $\varepsilon$ and the last inequality is derived from the Young inequality. Since $\langle\mathcal{I}'_{k}(u_{n}),u_{n}\rangle=0$, combining \eqref{bdd1} and \eqref{bdd2}, we obtain
\begin{equation}\label{bdd3}
  \|u_{n}\|^2_{\mathcal{H}(B_{k})} = \int_{B_{k}}u^2_{n}\log u^2_{n}d\mu
   \leq\frac{1+\varepsilon}{2}\|u_{n}\|^{2}_{\mathcal{H}(B_{k})}+
\frac{1-\varepsilon}{2}C_{\varepsilon}C_{1}\|u_{n}\|^{\frac{2}{1-\varepsilon}}_{2,B_{k}},
\end{equation}
which implies that $\{u_n\}$ is bounded in $\mathcal{H}(B_{k})$, and thus there exists a constant $M$  independent of $k$ such that $\|u_n\|_{\infty,B_k}\leq M(c,\varepsilon,\mu_{\min},h_0)$. By Lemma~\ref{Sobolevembedding}, up to a subsequence, there exists some $u\in \mathcal{H}(B_{k})$ such that $u_{n}\rightarrow u$ in $\mathcal{H}(B_{k})$ as $n\rightarrow\infty$.
\end{proof}

Now, we will prove that the local equation \eqref{locally} admits a nontrivial solution.

\begin{lemma}
Let $G=(V,E)$ be a connected locally finite graph satisfying \eqref{mu} and \eqref{omega}. Suppose that $(h_{1})$ hold. Then equation \eqref{locally} has a nontrivial solution.
\end{lemma}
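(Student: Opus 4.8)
The plan is to obtain the nontrivial solution as a critical point of $\mathcal{I}_k$ by a direct application of the Mountain Pass Theorem, since all of its hypotheses have already been verified in the preceding lemmas. Indeed, $\mathcal{I}_k\in C^1(\mathcal{H}(B_{k}),\mathbb{R})$ with $\mathcal{I}_k(0)=0$; Lemma~\ref{mp1} supplies the geometric condition $\mathcal{I}_k(u)\geq\delta>0$ on the sphere $\|u\|_{\mathcal{H}(B_{k})}=r$; and Lemma~\ref{ps-cond} guarantees the $(PS)_c$ condition at every level $c\in\mathbb{R}$, in particular at the minimax level constructed below. Thus the abstract machinery is entirely in place, and the task reduces to assembling it correctly.

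First I would fix an endpoint $e$ lying beyond the mountain. By Lemma~\ref{mp2} there is a nonnegative $u$ with $\mathcal{I}_k(tu)\ra-\infty$ as $t\ra+\infty$; choosing $t_0>0$ large enough that $\|t_0u\|_{\mathcal{H}(B_{k})}>r$ and $\mathcal{I}_k(t_0u)<0$, I set $e=t_0u$, so that the path class
\begin{equation*}
\Gamma=\{\gamma\in C([0,1],\mathcal{H}(B_{k})):\gamma(0)=0,\ \gamma(1)=e\}
\end{equation*}
is nonempty and the minimax value
\begin{equation*}
c_k=\inf_{\gamma\in\Gamma}\max_{s\in[0,1]}\mathcal{I}_k(\gamma(s))
\end{equation*}
is well defined. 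Since $\|\gamma(0)\|_{\mathcal{H}(B_{k})}=0<r<\|\gamma(1)\|_{\mathcal{H}(B_{k})}$, continuity and the intermediate value theorem force every path in $\Gamma$ to meet the sphere of radius $r$, whence Lemma~\ref{mp1} gives $c_k\geq\delta>0$. The Mountain Pass Theorem then produces $u_k\in\mathcal{H}(B_{k})$ with $\mathcal{I}_k(u_k)=c_k$ and $\mathcal{I}_k'(u_k)=0$.

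Nontriviality is immediate from the strict positivity of the level: because $\mathcal{I}_k(u_k)=c_k\geq\delta>0=\mathcal{I}_k(0)$, we must have $u_k\neq0$. It then remains to convert the criticality identity into the pointwise equation. For every $v\in\mathcal{H}(B_{k})$ we have
\begin{equation*}
\langle\mathcal{I}_k'(u_k),v\rangle=\int_{\overline{B}_{k}}\nabla u_k\nabla v\,d\mu+\int_{B_{k}}h(x)u_kv\,d\mu-\int_{B_{k}}u_kv\log u_k^2\,d\mu=0.
\end{equation*}
Applying Green's formula (Lemma~\ref{b}) to the gradient term and testing against the function equal to $1$ at a single vertex $x\in B_{k}$ and $0$ elsewhere, I recover $-\Delta u_k+h(x)u_k=u_k\log u_k^2$ for each $x\in B_{k}$, while $u_k=0$ on $\partial B_{k}$ follows from $u_k\in\mathcal{H}(B_{k})=W_0^{1,2}(B_{k})$. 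Hence $u_k$ solves \eqref{locally}.

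I do not expect a genuine obstacle here, as the analytic heart of the argument — the mountain pass geometry and the compactness/$(PS)$ property — is already secured by Lemmas~\ref{mp1}--\ref{ps-cond}; the proof is essentially an assembly. The only points requiring a little care are the explicit fixing of the endpoint $e$ with norm exceeding $r$, so that $\Gamma$ is nonempty and the minimax is well posed, and the passage from the weak identity to the pointwise equation, which relies on the finiteness of $B_{k}$ together with Lemma~\ref{b}. A minor convention worth recording is that the term $u_k\log u_k^2$ is read as $0$ wherever $u_k$ vanishes, consistent with $t\log t^2\ra0$ as $t\ra0$, so no difficulty arises at the zeros of $u_k$.
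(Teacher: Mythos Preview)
Your proposal is correct and follows essentially the same route as the paper: both apply the Mountain Pass Theorem after invoking Lemmas~\ref{mp1}, \ref{mp2}, and \ref{ps-cond}, obtain a critical point $u_k$ at the minimax level $c_k\geq\delta>0$, and conclude nontriviality from $c_k>0$. Your write-up is in fact more detailed than the paper's, which simply cites the mountain-pass theorem and records the Euler--Lagrange equation without spelling out the construction of the endpoint $e$, the lower bound $c_k\geq\delta$, or the passage from the weak identity to the pointwise equation via Lemma~\ref{b}.
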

\begin{proof}
By Lemma~\ref{mp1}, Lemma~\ref{mp2} and Lemma~\ref{ps-cond}, $\mathcal{I}_{k}$ satisfies all the hypotheses of
the mountain-pass theorem. Using the mountain-pass theorem due to Ambrosetti-Rabinowitz \cite{MW}, we conclude that $$c_k=\min_{\gamma\in\Gamma}\max_{u\in\gamma}\mathcal{I}_k(u)$$
is the critical level of $\mathcal{I}_k$, where
$$\Gamma=\le\{\g\in C([0,1],\mathcal{H}(B_{k})): \g(0)=0, \g(1)=e\ri\}.$$
In particular, there exists some $u_k\in \mathcal{H}(B_{k})$ such that $\mathcal{I}_k(u_k)=c_k\geq \delta>0$ and $u_{k}\not\equiv 0$ satisfies the
Euler-Lagrange equation
\begin{align}\label{locally1}
\begin{cases}
-\Delta u_k+h(x)u_k=u^2_k\log u_k^{2}, &\text{in}\ \ B_{k},\\
u_k=0, &\text{on}\ \ \partial B_{k}.
\end{cases}
\end{align}
\end{proof}

\vskip4pt

At the end of this section, we prove the existence of global solutions to the equation \eqref{log}.

\vskip8pt
\noindent  \emph{The proof of Theorem \ref{existence1}}.~~We will divide the proof into two steps as follows.

\vskip4pt
\textbf{(i) For any finite set $A\subset V$, the nontrivial solution $u_{k}$ of \eqref{locally} is uniformly bounded in $A$.}
\vskip4pt

Let $u_{k}$ be a nontrivial solution of \eqref{locally}. Similar to the proof of \eqref{bdd3} in Lemma \ref{ps-cond}, we have
\begin{equation}\label{bdd}
  \|u_{k}\|^2_{\mathcal{H}(B_{k})}=\int_{\overline{B}_k}|\nabla u_{k}|^2d\mu+\int_{B_k}h(x)u^2_{k}d\mu\leq C
\end{equation}
for some constant $C$ independent of $k$. Note that for any finite set $A\subset V$, there exists $B_{k}$ such that  $A\subset B_k$ for sufficiently large $k$. Since
\begin{align*}
  \|u_{k}\|^2_{\mathcal{H}(B_{k})}&=\int_{\overline{B}_k}|\nabla u_{k}|^2d\mu+\int_{B_k}h(x)u^2_{k}d\mu\\
  &\geq\int_{B_{k}}h(x)u^2_{k}d\mu\\
  &\geq h_0\int_{A}u^2_{k}d\mu\\
  &\geq h_0\mu_{\min}u^2_k(x_{0})
\end{align*}
for any fixed $x_{0}\in A$, which ensures that
$$\|u_k\|_{\infty, A}\leq\left(\frac{1}{h_{0}\mu_{\min}}\right)
^{\frac{1}{2}} \|u_{k}\|_{\mathcal{H}(B_{k})}\leq\sqrt{\frac{C}{h_{0}\mu_{\min}}}.$$
Thus $u_{k}$ is uniformly bounded in $A$.

\vskip4pt
\textbf{(ii) The equation \eqref{log} admits a nontrivial solution in $\mathcal{H}$. }
\vskip4pt

Note that $u_k$ is naturally viewed as a sequence of functions defined on $V$, say $u_k\equiv0$
on $V\backslash B_k$. Then by the step (i), there would exist a subsequence of $\{u_k\}$, which is still denoted by $\{u_k\}$, and a function $u^{*}$ such that $u_k$ converges to $u^{*}$ locally uniformly in $V$, i.e. for any
fixed positive integer $l$,
\begin{equation*}
  \underset{k\rightarrow\infty}\lim u_{k}(x)=u^{*},\ \ \forall x\in B_{l}.
\end{equation*}
Next we prove that $u^{*}\in \mathcal{H}$. Since $u_k\equiv0$ on $V\backslash B_k$, by \eqref{bdd} we have
\begin{align*}
\|u_{k}\|^2_{\mathcal{H}}&= \int_{V}(|\nabla u_{k}|^2+h(x)u^2_{k})d\mu\\
&=\underset{x\in V}\sum|\nabla u_k|^2(x)\mu(x)+\underset{x\in B_k}\sum h(x)u^2_{k}(x)\mu(x)\\
&=\frac{1}{2}\underset{x\in B_k}\sum\underset{y\sim x}\sum \omega_{xy}(u_{k}(y)-u_{k}(x))^2+\underset{x\in B_k}\sum h(x)u^2_{k}(x)\mu(x)+\frac{1}{2}\underset{x\in \partial B_k}\sum\underset{y\sim x}\sum \omega_{xy}(u_{k}(y)-u_{k}(x))^2\\
&\leq 2\|u_{k}\|^2_{\mathcal{H}(B_{k})}\\
&\leq 2C,
\end{align*}
which implies that $\{u_{k}\}$ is bounded in $\mathcal{H}$. Since $\mathcal{H}$ is weakly compact, it follows that up to a subsequence, $\{u_{k}\}$ converges to some function $u^{*}_{1}$ weakly in $\mathcal{H}$. This in particular implies
$$\int_{V}u_{k}\phi d\mu\rightarrow\int_{V}u^{*}_{1}\phi d\mu,\ \ \forall \phi\in C_{c}(V).$$
For any fixed point $z\in V$, taking
\begin{align*}
\phi(x)=\begin{cases}
1, &\text{if}\ \ x=z,\\
0, &\text{if}\ \ x\neq z.
\end{cases}
\end{align*}
Then $u_{k}(z)\rightarrow u^{*}_{1}(z).$
Hence by the uniqueness of the
limit, $u^{*}_{1}(z)=u^{*}(z)$ for all $z\in V$ and then $u^{*}\in \mathcal{H}$.
Thus, it follows from (\ref{locally1}) that for any fixed $x\in V$, there hold
$$-\Delta u^{*}+h(x)u^{*}=u^{*}\log (u^{*})^2.$$
Therefore $u^{*}$ is a solution of (\ref{log}).

Finally, we prove that $u^{*}$ is nontrivial.  Since $u_{k}$ is a nontrivial solution of equation \eqref{locally},
 $\langle \mathcal{I}_{k}'(u_{k}), u_{k}\rangle=0$.  By \eqref{inequality1} and Lemma~\ref{Sobolevembedding}, for any $0<\varepsilon<1$, there exists $C_{\varepsilon}$ such that
\begin{align*}
  \|u_{k}\|^{2}_{\mathcal{H}(B_k)} &= \int_{B_{k}}u_{k}^2\log u^{2}_{k}d\mu \\
   &\leq \int_{\{x\in B_{k}:u_{k}(x)\geq 1\}}u_{k}^2\log u^{2}_{k}d\mu\\
   &\leq C_{\varepsilon}\int_{B_{k}} |u_{k}|^{2+\varepsilon}d\mu \\
   &\leq C_{\varepsilon}C_{2}\|u_{k}\|^{2+\varepsilon}_{\mathcal{H}(B_{k})}.
\end{align*}
Thus, we obtain
\begin{equation*}
 \|u_{k}\|^2_{\mathcal{H}(B_{k})}\geq(C_{\varepsilon}C_{2})^{-\frac{2}{\varepsilon}}>0,
\end{equation*}
where $C_{\varepsilon}$ and $C_{2}$ is independent of $k$. Hence it implies that there exists a constant $\theta$ independent of $k$ satisfying $0<\theta<(C_{\varepsilon}C_{2})^{-\frac{2}{\varepsilon}}$ such that
\begin{equation}\label{M1}
 \int_{V}u_{k}^2\log u^{2}_{k}d\mu\geq\int_{B_{k}}u_{k}^2\log u^{2}_{k}d\mu=\|u_{k}\|^2_{\mathcal{H}(B_{k})}>\theta>0.
\end{equation}
Note that $u_k\rightharpoonup u^*$ in $\mathcal{H}$. Then it follows from \eqref{inequality1} that, for any $0<\varepsilon<1$, there exists $C_{\varepsilon}$ such that
\begin{equation*}
  \int_{V}|u_{k}^2\log u^{2}_{k}|d\mu
  \leq C_{\varepsilon}\int_{V}(|u_k|^{2-\varepsilon}+|u_k|^{2+\varepsilon})d\mu.
\end{equation*}
Thus by Lemma~\ref{Sobolevembedding}, Lebesgue dominated convergence theorem and \eqref{M1}, we have
\begin{equation*}
  \int_{V}(u^{*})^2\log (u^{*})^2d\mu=\underset{k\rightarrow\infty}\lim\int_{V}u_{k}^2\log u^{2}_{k}d\mu\geq \theta>0,
\end{equation*}
which implies that there must exist a vertex $x_0\in V$ such that $\log (u^*(x_0))^2>0$ and consequently there
holds $|u^*(x_0)|>1$. Therefore, $u^*\not\equiv 0$ and this completes the proof of the theorem.

\section{Existence of multiple solutions }
In this section, we will prove that the equation \eqref{log} admits infinitely many solutions by using the following symmetric mountain pass theorem.
\begin{lemma}[{\cite[Theorem 9.12]{Rabinowitz}}]\label{sym}
Let $X$ be an infinite dimensional Banach space, and let $J\in \mathcal{C}^{1}(X,\mathbb{R})$ be even, satisfying $(PS)$ condition and $J(0)=0$. If $X=Y\bigoplus Z$, where $Y$ is finite dimensional and $J$ satisfies

\noindent (i) there are constant $\varrho,\sigma>0$ such that $J|_{\partial B_{\varrho}\cap Z}\geq \sigma$ and

\noindent (ii) for each finite dimensional subspace $\widetilde{X}\subset X$ there exists an $R=R(\widetilde{X})$ such that $J\leq0$ on $\widetilde{X}\setminus B_{R(\widetilde{X})}$,

\noindent then $J$ possesses an unbounded sequence of critical values.
\end{lemma}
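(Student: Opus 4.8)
\emph{The proof of Theorem 9.12 in \cite{Rabinowitz}.} The plan is to prove this equivariant minimax theorem by combining the Krasnoselskii genus with an odd (equivariant) version of the quantitative deformation lemma. First I would recall the genus $\gamma$, defined for the class $\Sigma$ of closed symmetric sets $A=-A$ with $0\notin A$ as the least $m$ for which there is an odd continuous map $A\to\mathbb{R}^m\setminus\{0\}$, together with its standard properties: monotonicity and subadditivity, the neighborhood property (a compact $A\in\Sigma$ has $\gamma(A)<\infty$ and a symmetric neighborhood of the same genus), the mapping property ($\gamma(A)\leq\gamma(\overline{h(A)})$ for odd continuous $h$), and the Borsuk--Ulam fact that $\gamma(\partial B_\varrho\cap W)=\dim W$ for any finite-dimensional subspace $W$. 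Because $J$ is even and satisfies $(PS)$, the negative pseudo-gradient flow can be symmetrized, yielding an odd deformation that, whenever $[c-\varepsilon,c+\varepsilon]$ contains no critical value, pushes the sublevel set $\{J\leq c+\varepsilon\}$ into $\{J\leq c-\varepsilon\}$ while commuting with $u\mapsto -u$.

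Next I would construct the minimax values. Writing $S=\partial B_\varrho\cap Z$ for the sphere supplied by hypothesis (i), I introduce a pseudo-index $i^*$ relative to $S$: for $A\in\Sigma$ set $i^*(A)=\min_{h\in\mathcal{H}}\gamma(\overline{h(A)}\cap S)$, where $\mathcal{H}$ is the group of odd homeomorphisms of $X$ that are the identity on the region $\{J\leq 0\}$ (well adapted to hypothesis (ii)). For each $j\geq 1$ define $c_j=\inf\{\sup_{u\in A}J(u):A\in\Sigma,\ i^*(A)\geq j\}$. Hypothesis (i) forces every admissible $A$ to meet $S$, where $J\geq\sigma$, so that $c_j\geq\sigma>0$; the classes are nested, so $c_j\leq c_{j+1}$. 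Finiteness of each $c_j$ is obtained from hypothesis (ii): taking a finite-dimensional subspace $\widetilde{X}$ of dimension at least $j+\dim Y$ and the symmetric cap $B_{R(\widetilde{X})}\cap\widetilde{X}$, on which $J$ is bounded above and $J\leq 0$ on the outer boundary, produces an admissible set of pseudo-index $\geq j$ with finite supremum of $J$.

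With the values in hand, the standard argument shows each $c_j$ is a critical value: if some $c_j$ were regular, the odd deformation would move a near-optimal admissible set into a lower sublevel set without decreasing its pseudo-index (the deformation lies in $\mathcal{H}$ and preserves the index by the mapping property), contradicting the definition of $c_j$. The same reasoning gives the multiplicity estimate that, if $c=c_j=\cdots=c_{j+p}$, then $\gamma(K_c)\geq p+1$, where $K_c$ is the symmetric set of critical points at level $c$, with $0\notin K_c$ since $J(0)=0<\sigma\leq c$.

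Finally I would prove that $\{c_j\}$ is unbounded, which yields the conclusion. Suppose for contradiction $c_j\leq\bar c$ for all $j$. By $(PS)$ the critical set $K=\{u:J'(u)=0,\ \sigma\leq J(u)\leq\bar c\}$ is compact and symmetric with $0\notin K$, hence $\gamma(K)=q<\infty$. But the multiplicity estimate prevents more than $q$ of the nondecreasing values $c_j$ from coinciding within $[\sigma,\bar c]$, so the $c_j$ must eventually exceed $\bar c$, a contradiction. Therefore $c_j\to\infty$, and $J$ possesses an unbounded sequence of critical values. \textbf{The main obstacle} I anticipate is the bookkeeping of the pseudo-index: one must choose the group $\mathcal{H}$ and the reference sphere $S$ so that hypothesis (i) yields the uniform lower bound $\sigma$, hypothesis (ii) yields both finiteness and the admissible high-index caps, and the deformation stays within $\mathcal{H}$ so that the index is not lowered along the flow. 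The interplay of these three requirements, rather than any single estimate, is the delicate point.
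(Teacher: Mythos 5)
The paper offers no proof of this lemma at all: it is quoted verbatim as Theorem 9.12 of Rabinowitz's book \cite{Rabinowitz} and used as a black box in Section 3, so the only meaningful benchmark is Rabinowitz's own argument. Your architecture --- Krasnoselskii genus, an odd deformation lemma, minimax values built from the caps supplied by hypothesis (ii), the intersection with $S=\partial B_{\varrho}\cap Z$ forced by hypothesis (i), and the multiplicity estimate $\gamma(K_c)\geq p+1$ --- is the standard one, though strictly speaking Rabinowitz does not use Benci's pseudo-index relative to $S$: he works with minimax classes $\Gamma_j$ consisting of sets $h(\overline{D_m\setminus Y})$, where $D_m=\overline{B_{R_m}}\cap E_m$ are exactly your caps, $h$ is odd and equals the identity on $\partial B_{R_m}\cap E_m$, and $Y$ is symmetric with $\gamma(Y)\leq m-j$; the lower bound $c_j\geq\sigma$ then rests on a Borsuk--Ulam type intersection proposition (his Proposition 9.23) showing every $B\in\Gamma_j$ with $j>\dim Y$ meets $S$. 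Your pseudo-index packaging is a legitimate equivalent route, but be aware that the step you merely assert --- that the cap $B_{R(\widetilde{X})}\cap\widetilde{X}$ has pseudo-index at least $\dim\widetilde{X}-\dim Y$ --- is precisely that intersection lemma and is the technical heart of the whole proof, not bookkeeping.

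There is, however, a genuine logical gap in your final step. From $c_j\leq\bar{c}$ for all $j$ you argue that, since at most $q=\gamma(K)$ consecutive values can coincide, ``the $c_j$ must eventually exceed $\bar{c}$.'' That inference fails: a nondecreasing bounded sequence need not have coinciding entries. The $c_j$ could be strictly increasing with limit $c^{*}\leq\bar{c}$, producing infinitely many \emph{distinct} critical values in $[\sigma,\bar{c}]$, which is perfectly consistent with $\gamma(K)=q<\infty$ --- a compact critical set can carry infinitely many critical values. The correct conclusion requires one more excision--deformation argument at the limit level, as in Rabinowitz: with $c_j\uparrow c^{*}$, take a closed symmetric $\delta$-neighborhood $N$ of the compact set of critical points at levels near $c^{*}$ with $\gamma(\overline{N})=q$; the odd deformation lemma supplies $\varepsilon>0$ and an odd $\eta$ with $\eta(\{J\leq c^{*}+\varepsilon\}\setminus N)\subset\{J\leq c^{*}-\varepsilon\}$; choose $j$ with $c_j>c^{*}-\varepsilon$ and a near-optimal admissible set $A$ for $c_{j+q}$ with $\sup_{A}J\leq c^{*}+\varepsilon$; by subadditivity of the genus the excised set $\overline{A\setminus N}$ still has pseudo-index at least $j$, and $\eta(\overline{A\setminus N})$ is admissible with $\sup J\leq c^{*}-\varepsilon<c_j$, contradicting the definition of $c_j$. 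Your sketch has all the ingredients for this repair (excision via subadditivity, the odd deformation, membership of $\eta$ in $\mathcal{H}$), but as written the unboundedness claim does not follow from the multiplicity estimate alone.
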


Now, we present a Sobolev embedding result, by which we know that the functional $\mathcal{J}$ defined by \eqref{functionalJ} is well defined in $\mathcal{W}$, where $\mathcal{W}$ is defined by \eqref{fW}.

\begin{lemma}[{\cite[Lemma 4]{ChangYan1}}]\label{e}
Let $G=(V,E)$ be a connected locally finite graph satisfying \eqref{mu} and \eqref{omega}. Assume that $(h'_{1})$ and $(h'_{2})$ hold.  Then $\mathcal{W}$ is weakly pre-compact and $\mathcal{W}$ is compactly embedded into $L^{q}(V)$ for any $q\in [1,+\infty]$.
\end{lemma}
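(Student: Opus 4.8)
The plan is to first reduce the weak pre-compactness to a structural observation and then concentrate on the compact embedding, whose core is a tail estimate driven by $(h'_2)$. Since $(h'_1)$ guarantees $h(x)+1\geq h_1+1>0$ for all $x$, the quadratic form $\int_V[|\nabla u|^2+(h(x)+1)u^2]d\mu$ is positive definite, so $\|\cdot\|_{\mathcal{W}}$ is a genuine norm and $\mathcal{W}$ (being a closed subspace of $W^{1,2}(V)$) is a Hilbert space. Reflexivity then makes every bounded sequence weakly sequentially pre-compact, which settles the first assertion. For the embedding I would begin with a uniform pointwise bound: isolating the single term at a vertex $x$ and using $\mu(x)\geq\mu_{\min}$ gives $(h_1+1)\mu_{\min}u(x)^2\leq\int_V(h(x)+1)u^2d\mu\leq\|u\|_{\mathcal{W}}^2$, hence $\|u\|_{\infty}\leq((h_1+1)\mu_{\min})^{-1/2}\|u\|_{\mathcal{W}}$, so $\mathcal{W}\hookrightarrow L^{\infty}(V)$ continuously.

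Next, for a sequence $\{u_n\}$ bounded in $\mathcal{W}$, the uniform bound $|u_n(x)|\leq M$ combined with the countability and local finiteness of $V$ permits a diagonal extraction so that, along a subsequence, $u_n(x)\to u(x)$ for every $x\in V$. Passing to the limit in partial sums over finite subsets and then exhausting $V$ (a Fatou-type argument) shows $u\in\mathcal{W}$ with $\|u\|_{\mathcal{W}}\leq\liminf_n\|u_n\|_{\mathcal{W}}$, and testing against $\phi\in C_c(V)$ identifies $u$ with the weak limit. It then suffices to put $v_n:=u_n-u$, which is bounded in $\mathcal{W}$ and satisfies $v_n(x)\to0$ for each $x$, and to prove $v_n\to0$ in $L^1(V)$.

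The hard part, where I expect the main obstacle, is the tail control furnished by $(h'_2)$. First observe that since $\mu(x)\geq\mu_{\min}$ and $Vol(V_\alpha)<\infty$, the set $V_\alpha$ is in fact \emph{finite}; on a finite set the pointwise convergence of the uniformly bounded $v_n$ immediately gives $\int_{V_\alpha}|v_n|d\mu\to0$. On the complement one has $h(x)>\alpha>0$, so $h(x)v_n(x)^2\leq(h(x)+1)v_n(x)^2$, and Cauchy--Schwarz yields
\[
\int_{V\setminus V_\alpha}|v_n|d\mu
\leq\left(\int_{V\setminus V_\alpha}h(x)v_n^2d\mu\right)^{1/2}\left(\int_{V\setminus V_\alpha}h^{-1}d\mu\right)^{1/2}
\leq\|v_n\|_{\mathcal{W}}\,\|h^{-1}\|_{L^1(V\setminus V_\alpha)}^{1/2}.
\]
Because $h^{-1}\in L^1(V\setminus V_\alpha)$, for any $\varepsilon>0$ I can enlarge $V_\alpha$ to a finite set $K$ with $\int_{V\setminus K}h^{-1}d\mu<\varepsilon$; the same estimate on $V\setminus K$ gives $\int_{V\setminus K}|v_n|d\mu\leq C\sqrt{\varepsilon}$ uniformly in $n$, while $\int_K|v_n|d\mu\to0$ since $K$ is finite. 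Letting $n\to\infty$ and then $\varepsilon\to0$ forces $\|v_n\|_1\to0$.

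Finally, on these graphs $L^1$-convergence upgrades to every exponent essentially for free. Since $\mu(x)\geq\mu_{\min}$, we have $\|v_n\|_{\infty}=\sup_x|v_n(x)|\leq\sum_x|v_n(x)|\leq\mu_{\min}^{-1}\sum_x|v_n(x)|\mu(x)=\mu_{\min}^{-1}\|v_n\|_1\to0$, and then for $1<q<\infty$ the interpolation $\|v_n\|_q^q\leq\|v_n\|_\infty^{q-1}\|v_n\|_1\to0$ completes the argument. Hence $u_n\to u$ in $L^q(V)$ for all $q\in[1,+\infty]$, giving the compact embedding. Everything apart from the tail estimate on $V\setminus V_\alpha$ is a routine diagonal-plus-interpolation argument adapted to the discrete setting; the delicate point is precisely the use of the integrability of $h^{-1}$ away from the finite set $V_\alpha$.
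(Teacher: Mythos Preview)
The paper does not actually prove this lemma; it is quoted verbatim as \cite[Lemma~4]{ChangYan1} and used as a black box. So there is no proof in the present paper to compare your attempt against.

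Your argument is correct and follows the standard route one would expect for a result of this type (and essentially the one used in \cite{ChangYan1} and, under the stronger hypotheses $(h_1)$--$(h_2)$, in \cite{GLY3,ZhangZhao}): Hilbert-space reflexivity for weak pre-compactness, a pointwise $L^\infty$ bound from $\mu\geq\mu_{\min}$, a diagonal extraction for pointwise convergence, the key Cauchy--Schwarz tail estimate $\int_{V\setminus K}|v_n|\,d\mu\leq\big(\int h v_n^2\big)^{1/2}\big(\int_{V\setminus K}h^{-1}\big)^{1/2}$ exploiting $(h'_2)$, and interpolation to lift $L^1$ convergence to all $L^q$. One small quibble: the parenthetical ``being a closed subspace of $W^{1,2}(V)$'' is not quite the right reason---$\mathcal{W}$ need not be closed in the $W^{1,2}(V)$ norm; what you actually need (and what holds) is that $(\mathcal{W},\|\cdot\|_{\mathcal{W}})$ is itself complete, hence a Hilbert space in its own right. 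This does not affect the validity of the rest of your argument.
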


The following lemma gives the same conclusion as Proposition 9 in \cite{ChangYan1},  which tells us that the functional $\mathcal{J}\in C^{1}(\mathcal{W},\mathbb{R})$, while the authors in \cite{ChangYan1} did not give a detailed proof of it. We supplement the proof here for the convenience of the readers.

\begin{lemma}\label{smooth}
Under the same assumptions as in Lemma~\ref{e},  the functional $\mathcal{J}\in C^{1}(\mathcal{W},\mathbb{R})$.
Furthermore, for any $v\in \mathcal{W}$,
\begin{equation}\label{Id}
  \langle \mathcal{J}^\prime(u),v\rangle=\int_{V}(\nabla u\nabla v+h(x)uv)d\mu-\int_{V}uv\log u^2d\mu.
\end{equation}
\end{lemma}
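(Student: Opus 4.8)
The plan is to establish that $\mathcal{J}$ is continuously Fréchet differentiable on $\mathcal{W}$ by separately handling the quadratic part and the logarithmic part of the functional. Write $\mathcal{J}(u)=\frac{1}{2}\|u\|_{\mathcal{W}}^{2}-\frac{1}{2}\Phi(u)$, where $\Phi(u)=\int_{V}u^{2}\log u^{2}d\mu$. The quadratic term $\frac{1}{2}\|u\|_{\mathcal{W}}^{2}$ is the square of a Hilbert-space norm, so it is smooth with derivative $\int_{V}(\nabla u\nabla v+(h(x)+1)uv)\,d\mu$; the only real work concerns $\Phi$. First I would verify that $\Phi$ is finite on all of $\mathcal{W}$: using the elementary inequality \eqref{inequality1}, for any $0<\varepsilon<1$ we have $|u^{2}\log u^{2}|\leq C_{\varepsilon}(|u|^{2-\varepsilon}+|u|^{2+\varepsilon})$ pointwise, so that $\int_{V}|u^{2}\log u^{2}|\,d\mu\leq C_{\varepsilon}(\|u\|_{2-\varepsilon}^{2-\varepsilon}+\|u\|_{2+\varepsilon}^{2+\varepsilon})<\infty$, the finiteness following from the compact embedding $\mathcal{W}\hookrightarrow L^{q}(V)$ for all $q\in[1,+\infty]$ guaranteed by Lemma~\ref{e}. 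This at once shows $\mathcal{J}$ is well defined and real-valued on $\mathcal{W}$.

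Next I would compute the Gâteaux derivative of $\Phi$. Fixing $u,v\in\mathcal{W}$ and setting $g(s)=s^{2}\log s^{2}$, whose derivative is $g'(s)=2s\log s^{2}+2s$, I would differentiate $\Phi(u+tv)$ under the summation sign at $t=0$ to obtain the candidate
\begin{equation*}
\langle\Phi'(u),v\rangle=\int_{V}\bigl(2uv\log u^{2}+2uv\bigr)d\mu.
\end{equation*}
Combining this with the derivative of the quadratic part, the $+2uv$ contribution cancels against the $+1$ in $(h(x)+1)$, leaving exactly the claimed formula \eqref{Id}. To justify differentiating term by term, I would use the mean value theorem to bound the difference quotient $t^{-1}\bigl(g(u+tv)-g(u)\bigr)$ by an integrable majorant uniformly for small $t$, again invoking \eqref{inequality1} applied to the intermediate point together with the $L^{q}$ bounds on $u$ and $v$, so that the Lebesgue dominated convergence theorem (valid here for the counting-type measure $\mu$) permits passage to the limit.

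Finally, to upgrade from Gâteaux to continuous Fréchet differentiability, I would show that the map $u\mapsto\mathcal{J}'(u)$ is continuous from $\mathcal{W}$ into its dual. If $u_{n}\to u$ in $\mathcal{W}$, then by Lemma~\ref{e} we have $u_{n}\to u$ in every $L^{q}(V)$, and I would estimate
\begin{equation*}
\bigl|\langle\mathcal{J}'(u_{n})-\mathcal{J}'(u),v\rangle\bigr|\leq\|u_{n}-u\|_{\mathcal{W}}\|v\|_{\mathcal{W}}+\Bigl|\int_{V}\bigl(u_{n}v\log u_{n}^{2}-uv\log u^{2}\bigr)d\mu\Bigr|,
\end{equation*}
then bound the nonlinear integral uniformly in $\|v\|_{\mathcal{W}}\leq1$ using Hölder's inequality together with \eqref{inequality1} to control $s\log s^{2}$ by powers $|s|^{1\pm\varepsilon}$. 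I expect the main obstacle to be this last continuity estimate: the logarithm makes the Nemytskii-type map $u\mapsto u\log u^{2}$ nonsmooth near the origin, so care is needed near vertices where $u$ is small, and one must exploit the fact that the strong $L^{q}$ convergence supplied by the compact embedding of Lemma~\ref{e} dominates the mild singularity of the logarithm. Once this continuity is in hand, the standard criterion that a Gâteaux-differentiable functional with continuous derivative is $C^{1}$ yields $\mathcal{J}\in C^{1}(\mathcal{W},\mathbb{R})$ with the derivative given by \eqref{Id}.
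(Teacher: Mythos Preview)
Your proposal is correct and follows essentially the same route as the paper: split $\mathcal{J}$ into the quadratic part $\tfrac12\|u\|_{\mathcal{W}}^{2}$ and the logarithmic part $\tfrac12\int_{V}u^{2}\log u^{2}\,d\mu$, compute the G\^ateaux derivative of the latter via the mean value theorem, the pointwise bound \eqref{inequality1}, and dominated convergence, then upgrade to $C^{1}$ by showing continuity of the derivative. The only cosmetic difference is in the continuity step: where you sketch a direct H\"older estimate on $\int_{V}(u_{n}\log u_{n}^{2}-u\log u^{2})v\,d\mu$, the paper packages this by invoking a Nemytskii-operator continuity lemma (Lemma~5.12 in \cite{HanShao}) to deduce $u_{n}\log u_{n}^{2}\to u\log u^{2}$ in the sum space $L^{\frac{2-\varepsilon}{1-\varepsilon}}(V)+L^{\frac{2+\varepsilon}{1+\varepsilon}}(V)$ and then applies H\"older against $v\in L^{2-\varepsilon}(V)\cap L^{2+\varepsilon}(V)$; this is exactly the mechanism your sketch gestures at, made precise.
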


\begin{proof}
For any $u\in \mathcal{W}$, let
\begin{equation*}
  \varphi(u)=\frac{1}{2}\int_V(|\nabla u|^2+(h(x)+1)u^2)d\mu
\end{equation*}
and
\begin{equation*}
  \psi(u)=\frac{1}{2}\int_{V}u^2\log u^2d\mu.
\end{equation*}
Then $\mathcal{J}(u)=\varphi(u)-\psi(u)$.
We only need to prove that $\varphi, \psi\in C^{1}(\mathcal{W},\mathbb{R})$ and for any $v\in \mathcal{W}$,
\begin{equation*}
 \langle \varphi^\prime(u),v\rangle=\int_{V}(\nabla u\nabla v+(h(x)+1)uv)d\mu,
\end{equation*}
\begin{equation*}
  \langle \psi^\prime(u),v\rangle=\int_{V}(uv+uv\log u^2)d\mu.
\end{equation*}

First, we verify $\varphi\in C^{1}(\mathcal{W},\mathbb{R})$. Indeed, for any $v\in\mathcal{W}$,

\begin{align*}
(\varphi^\prime(u),v)&=\underset{t\rightarrow 0}\lim\frac{\varphi(u+tv)-\varphi(u)}{t}\\
&=\frac{1}{2}\underset{t\rightarrow 0}\lim\frac{1}{t}\left[\int_{V}(|\nabla (u+tv)|^{2}+( h(x)+1)(u+tv)^{2})d\mu-\int_{V}(|\nabla u|^{2}+(h(x)+1)u^{2})d\mu\right]\\
&=\int_{V}(\nabla u\nabla v+(h(x)+1)uv)d\mu.
\end{align*}
Let $u_{n}\rightarrow u$ in $\mathcal{W}$,  then $u_n\rightarrow u$ in $L^2(V)$. We claim that $\|\varphi^\prime(u_{n})-\varphi^\prime(u)\|_{\mathcal{W}^{*}}\rightarrow 0$ as $n\rightarrow \infty$, where $\mathcal{W}^{*}$ is the dual space of $\mathcal{W}$. In fact, for any $v\in \mathcal{W}$ with $\|v\|_{\mathcal{W}}=1$, we have
\begin{align*}
\langle\varphi'(u_{n})-\varphi'(u), v\rangle&=\int_{V}[\nabla(u_{n}-u)\nabla v+(h(x)+1)(u_{n}-u)v]d\mu\\
&=\int_{V}[\nabla(u_{n}-u)\nabla v+h(x)(u_{n}-u)v]d\mu+\int_{V}(u_{n}-u)vd\mu\\
&\leq\left(\int_{V}|\nabla (u_{n}-u)|^{2}d\mu\right)^{\frac{1}{2}}\left(\int_{V}|\nabla v|^{2}d\mu\right)^{\frac{1}{2}}\\
&\phantom{{}={}}+\left(\int_{V}h(x)(u_{n}-u)^{2}d\mu\right)^{\frac{1}{2}}
\left(\int_{V}h(x)v^{2}d\mu\right)^{\frac{1}{2}}
+\left(\int_{V}(u_{n}-u)^{2}d\mu\right)^{\frac{1}{2}}
\left(\int_{V}v^{2}d\mu\right)^{\frac{1}{2}}\\
&\leq 2\|u_{n}-u\|_{\mathcal{W}}\|v\|_{\mathcal{W}}+\|u_n-u\|_2\|v\|_2\rightarrow 0 \ \ \hbox{as}\ \ n\rightarrow\infty.
\end{align*}
Hence, $\|\varphi'(u_{n})-\varphi'(u)\|_{\mathcal{W}^{*}}=\underset{\|v\|_{\mathcal{W}}=1}\sup \langle\varphi'(u_{n})-\varphi'(u), v\rangle\rightarrow 0$ as $n\rightarrow\infty$. Therefore, $\varphi\in C^{1}(\mathcal{W},\mathbb{R})$.

Next, we prove that $\psi\in C^{1}(\mathcal{W},\mathbb{R})$.
Let $u,v\in \mathcal{W}\hookrightarrow L^{q}(V)$, $q\geq 1$, then we have $u,v\in L^{q}(V)$. Given $x\in V$ and $0<|t|<1$, by the mean value theorem and \eqref{inequality1}, there exists $\xi\in (0,1)$ such that
\begin{align*}
  &\frac{\left|(u+tv)^{2}\log (u+tv)^{2}-u^{2}\log u^{2}\right|}{|t|}\\
  =&2\left|(u+\xi tv)\cdot v\cdot\log (u+\xi tv)^2+(u+\xi tv)\cdot v\right|  \\
  \leq& 2C_\varepsilon(|u+\xi tv|^{1-\varepsilon}+|u+\xi tv|^{1+\varepsilon})|v|+2|u+\xi tv|\cdot|v|\\
  \leq& 2^{2-\varepsilon}C_\varepsilon(|u|^{1-\varepsilon}+|v|^{1-\varepsilon})|v|+ 2^{2+\varepsilon}C_\varepsilon(|u|^{1+\varepsilon}+|v|^{1+\varepsilon})|v|+2|u+v|\cdot|v|.
\end{align*}
The H\"{o}lder's inequality implies that
\begin{equation*}
\left[2^{2-\varepsilon}C_\varepsilon(|u|^{1-\varepsilon}+|v|^{1-\varepsilon})|v|+ 2^{2+\varepsilon}C_\varepsilon(|u|^{1+\varepsilon}+|v|^{1+\varepsilon})|v|+2|u+v|\cdot|v|\right]\in L^{1}(V).
\end{equation*}
It follows from  Lebesgue dominated convergence theorem that
\begin{align*}
\langle\psi'(u),v\rangle&=\underset{t\rightarrow 0}\lim\frac{\psi(u+tv)-\psi(u)}{t}\\
&=\frac{1}{2}\int_{V}\underset{t\rightarrow 0}\lim\frac{(u+tv)^{2}\log (u+tv)^{2}-u^{2}\log u^{2}}{t}d\mu\\
&=\int_{V}(uv+uv\log u^2)d\mu.
\end{align*}
Note that for any $0<\varepsilon<1$, there exists $C_{\varepsilon}>0$ such that
\begin{equation*}
  |u\log u^2|\leq C_{\varepsilon}(|u|^{1-\varepsilon}+|u|^{1+\varepsilon})
  =C_{\varepsilon}\left(|u|^{\frac{2-\varepsilon}{(2-\varepsilon)/(1-\varepsilon)}}
  +|u|^{\frac{2+\varepsilon}{(2+\varepsilon)/(1+\varepsilon)}}\right).
\end{equation*}
To prove the G\^{a}teaux derivative of $\psi$ is continuous,
we let $f(u)=u\log u^2$ and assume that $u_{n}\rightarrow u$ in $\mathcal{W}$ as $n\rightarrow\infty$.
Then by Lemma~\ref{e},  up to a subsequence, we have $u_{n}\rightarrow u$ in $L^{2-\varepsilon}(V)\cap L^{2+\varepsilon}(V)$, where $||\cdot||_{L^{2-\varepsilon}(V)
\cap L^{2+\varepsilon}(V)}:=||\cdot||_{{2-\varepsilon}}+||\cdot||_{2+\varepsilon}$.
It follows from Lemma~5.12 in \cite{HanShao} that 
\begin{equation*}
  f(u_{n})\rightarrow f(u)\ \ \hbox{in}\ \  L^{\frac{2-\varepsilon}{1-\varepsilon}}(V)+L^{\frac{2+\varepsilon}{1+\varepsilon}}(V).
\end{equation*}
Here, for any $w\in L^{r}(V)+ L^{s}(V), r,s\geq 1$, we define the norm
$$\|w\|_{L^{r}(V)+ L^{s}(V)}:=\inf\{||w_1||_{r}+||w_2||_{s}:w_1\in L^{r}(V),w_2\in L^{s}(V),w=w_1+w_2\}.$$
On the other hand, by the H\"{o}lder's inequality, we obtain
\begin{align*}
  |\langle\psi'(u_{n})- \psi'(u),v\rangle|&=\left|\int_{V}(u_n\log u^2_n-u\log u^2)vd\mu+\int_{V}(u_n-u)vd\mu\right|\\
  &\leq \|f(u_{n})-f(u)\|_{L^{\frac{2-\varepsilon}{1-\varepsilon}}(V)+L^{\frac{2+\varepsilon}{1+\varepsilon}}(V)}
  \|v\|_{L^{2-\varepsilon}(V)\cap L^{2+\varepsilon}(V)}+\|u_n-u\|_2\|v\|_2.
\end{align*}
Then
$$\|\psi'(u_{n})- \psi'(u)\|_{\mathcal{W}^{*}}\rightarrow 0 \ \ \hbox{as}\ \  n\rightarrow \infty,$$
which implies that $\psi\in C^{1}(\mathcal{W},\mathbb{R})$. Thus $\mathcal{J}\in C^{1}(\mathcal{W},\mathbb{R})$ and (\ref{Id}) holds.
\end{proof}

Next, we verify that the functional $\mathcal{J}$ satisfies the $(PS)$ condition.

\begin{lemma}\label{IPS}
Under the same assumptions as in Lemma~\ref{e}. Then for any $c\in \mathbb{R}$, $\mathcal{J}$ satisfies the $(PS)_{c}$ condition.
\end{lemma}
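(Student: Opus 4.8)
The plan is to verify the $(PS)_c$ condition in two stages: first show that any Palais--Smale sequence $\{u_n\}\subset\mathcal{W}$ at level $c$ is bounded in $\mathcal{W}$, and then upgrade the weak convergence furnished by Lemma~\ref{e} to strong convergence in $\mathcal{W}$.

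For the boundedness, the natural starting point is the algebraic identity
\begin{equation*}
\mathcal{J}(u_n)-\frac{1}{2}\langle\mathcal{J}'(u_n),u_n\rangle=\frac{1}{2}\int_V u_n^2\,d\mu=\frac{1}{2}\|u_n\|_2^2,
\end{equation*}
which follows by subtracting \eqref{functionalJ} and \eqref{Id} with $v=u_n$; since $\mathcal{J}(u_n)\to c$ and $\langle\mathcal{J}'(u_n),u_n\rangle\leq\|\mathcal{J}'(u_n)\|_{\mathcal{W}^{*}}\|u_n\|_{\mathcal{W}}=o(\|u_n\|_{\mathcal{W}})$, this already gives $\|u_n\|_2^2\leq 2c+o(\|u_n\|_{\mathcal{W}})$. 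The difficulty, and the main obstacle of the whole lemma, is that the logarithmic nonlinearity fails the Ambrosetti--Rabinowitz condition, so this single identity does not close the estimate. To remedy this I would feed $\langle\mathcal{J}'(u_n),u_n\rangle=o(\|u_n\|_{\mathcal{W}})$ back in the form
\begin{equation*}
\|u_n\|_{\mathcal{W}}^2=\|u_n\|_2^2+\int_V u_n^2\log u_n^2\,d\mu+o(\|u_n\|_{\mathcal{W}}),
\end{equation*}
and bound the logarithmic term from above exactly as in \eqref{bdd2}: discarding the negative contribution of $\{|u_n|<1\}$ and splitting $|u_n|^{2+\varepsilon}=|u_n|\,|u_n|^{1+\varepsilon}$ via H\"older gives $\int_V u_n^2\log u_n^2\,d\mu\leq C_\varepsilon\|u_n\|_2\|u_n\|_{2(1+\varepsilon)}^{1+\varepsilon}\leq C\|u_n\|_2\|u_n\|_{\mathcal{W}}^{1+\varepsilon}$, using the embedding $\mathcal{W}\hookrightarrow L^{2(1+\varepsilon)}(V)$ from Lemma~\ref{e}. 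Combining with $\|u_n\|_2\leq C(1+\|u_n\|_{\mathcal{W}}^{1/2})$ yields $\|u_n\|_{\mathcal{W}}^2\leq C(1+\|u_n\|_{\mathcal{W}}^{3/2+\varepsilon})$, and choosing $\varepsilon\in(0,\tfrac12)$ makes the right-hand exponent strictly less than $2$, which forces $\{u_n\}$ to be bounded in $\mathcal{W}$.

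Once boundedness is in hand, Lemma~\ref{e} supplies a subsequence with $u_n\rightharpoonup u$ weakly in $\mathcal{W}$ and $u_n\to u$ strongly in $L^q(V)$ for every $q\in[1,+\infty]$. To conclude I would estimate
\begin{equation*}
\|u_n-u\|_{\mathcal{W}}^2=\langle\mathcal{J}'(u_n)-\mathcal{J}'(u),u_n-u\rangle+\|u_n-u\|_2^2+\int_V\big(u_n\log u_n^2-u\log u^2\big)(u_n-u)\,d\mu,
\end{equation*}
which again comes from \eqref{Id} together with $\int_V(|\nabla w|^2+h w^2)\,d\mu=\|w\|_{\mathcal{W}}^2-\|w\|_2^2$. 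The first term tends to $0$ because $\mathcal{J}'(u_n)\to 0$ in $\mathcal{W}^{*}$ with $\{u_n-u\}$ bounded, while $u_n-u\rightharpoonup 0$ pairs to $0$ against the fixed functional $\mathcal{J}'(u)$; the second tends to $0$ by the strong $L^2$ convergence. For the logarithmic term I would reuse the duality set up in the proof of Lemma~\ref{smooth}: writing $f(s)=s\log s^2$ and pairing $L^{\frac{2-\varepsilon}{1-\varepsilon}}(V)+L^{\frac{2+\varepsilon}{1+\varepsilon}}(V)$ against $L^{2-\varepsilon}(V)\cap L^{2+\varepsilon}(V)$, its absolute value is at most the product of the norms of $f(u_n)$ and $f(u)$ in the sum space, which stay bounded by \eqref{inequality1} and the boundedness of $\{u_n\}$, with $\|u_n-u\|_{L^{2-\varepsilon}(V)\cap L^{2+\varepsilon}(V)}$, which tends to $0$ by compactness. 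Hence $\|u_n-u\|_{\mathcal{W}}\to 0$, establishing the $(PS)_c$ condition.
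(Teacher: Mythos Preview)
Your proof is correct and follows essentially the same route as the paper: both arguments control $\|u_n\|_2^2$ via $\mathcal{J}(u_n)-\tfrac12\langle\mathcal{J}'(u_n),u_n\rangle$, bound $\int_V u_n^2\log u_n^2\,d\mu$ by $C\|u_n\|_2\|u_n\|_{\mathcal{W}}^{1+\varepsilon}$ through \eqref{inequality1}, H\"older and Lemma~\ref{e}, and then use the same expansion of $\|u_n-u\|_{\mathcal{W}}^2$ to upgrade weak to strong convergence. The only cosmetic differences are that the paper closes the boundedness estimate with Young's inequality (absorbing $\tfrac{1+\varepsilon}{2}\|u_n\|_{\mathcal{W}}^2$) rather than your direct power-counting $\|u_n\|_{\mathcal{W}}^2\leq C(1+\|u_n\|_{\mathcal{W}}^{3/2+\varepsilon})$, and it handles the logarithmic remainder in the convergence step by a bare H\"older estimate instead of invoking the $L^{\frac{2-\varepsilon}{1-\varepsilon}}+L^{\frac{2+\varepsilon}{1+\varepsilon}}$ versus $L^{2-\varepsilon}\cap L^{2+\varepsilon}$ duality from Lemma~\ref{smooth}.
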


\begin{proof}
Assume that $\{u_n\}\subset \mathcal{W}$ satisfies $\mathcal{J}(u_n)\rightarrow c$ and $\mathcal{J}'(u_n)\rightarrow 0$ as $n\rightarrow \infty$. Then we obtain
\begin{equation}\label{Iu2}
  \|u_{n}\|_2=\int_{V}u_n^2d\mu=2\mathcal{J}(u_n)-\langle\mathcal{J}'(u_n),u_n\rangle\leq c+1+o(1)\|u_{n}\|_{\mathcal{W}}.
\end{equation}
Similar to the proof of \eqref{bdd2}, by Lemma~\ref{e}, H\"{o}lder's inequality and Yong inequality , we have for any $\varepsilon\in (0,1)$, there exist $C_{\varepsilon},C_{3}>0$ such that
\begin{align}\label{bdd5}
\int_{V}u^2_n\log u_n^2d\mu&\leq\int_{\{x\in V:u\geq1\}}u^2_n\log u_n^2d\mu\nonumber\\
&\leq C_{\varepsilon}\int_{V}|u_{n}|^{2+\varepsilon}d\mu\nonumber\\
&=C_{\varepsilon}\|u_{n}\|_{2}\|u_{n}\|^{1+\varepsilon}_{2(1+\varepsilon)}\\
&\leq C_{\varepsilon}C_{3}\|u_{n}\|_{2}\|u_{n}\|^{1+\varepsilon}_{\mathcal{W}}\nonumber\\
&\leq\frac{1+\varepsilon}{2}\|u_{n}\|^{2}_{\mathcal{W}}+
\frac{1-\varepsilon}{2}C'_{\varepsilon}C'_{3}\|u_{n}\|^{\frac{2}{1-\varepsilon}}_{2}.\nonumber
\end{align}
Note that $\langle\mathcal{J}'(u_{n}),u_{n}\rangle=0$, then by \eqref{Iu2} and \eqref{bdd5}, we obtain
\begin{align*}
   \|u_{n}\|^2_{\mathcal{W}} &= \int_{V}u^2_n\log u_n^2d\mu+\|u_{n}\|^2_2\\
  &\leq \frac{1+\varepsilon}{2}\|u_{n}\|^{2}_{\mathcal{W}}+
\frac{1-\varepsilon}{2}C'_{\varepsilon}C'_{3}\|u_{n}\|^{\frac{2}{1-\varepsilon}}_{2}+\|u_{n}\|^2_2,
\end{align*}
which implies that $\{u_n\}$ is bounded in $\mathcal{W}$. Since $\mathcal{W}$ is weakly pre-compact, there exists $u\in \mathcal{W}$ such that, up to a subsequence,
\begin{equation}\label{weak}
  \left\{\aligned&u_{n} \rightharpoonup u  &\hbox{in} \ \ \mathcal{W},\\
&u_{n}(x)\rightarrow u(x) &\forall x\in V,\\
&u_{n} \rightarrow u &\hbox{in}\ \ L^{q}(V),\ \ q\in [1,+\infty].
\endaligned\right.
\end{equation}
By direct calculation, one gets
\begin{align}\label{weaka1}
  \|u_{n}-u\|^2_{\mathcal{W}} &= \langle \mathcal{J}'(u_{n}),u_n-u\rangle-\langle \mathcal{J}'(u),u_n-u\rangle+\|u_n-u\|^2_2\nonumber \\
   &+\int_{V}u_n(u_n-u)\log u^2_nd\mu-\int_{V}u(u_n-u)\log u^2d\mu
\end{align}
It is easy to prove that
\begin{equation}\label{weak1}
  \langle \mathcal{J}'(u_{n}),u_n-u\rangle-\langle \mathcal{J}'(u),u_n-u\rangle\rightarrow0\ \
\hbox{and} \ \ \|u_n-u\|^2_2\rightarrow0\ \ \hbox{as}\ \ n\rightarrow\infty
\end{equation}
by using \eqref{weak}. It follows from \eqref{inequality1}, \eqref{weak} and the H\"{o}lder's inequality that
\begin{align}\label{weaka2}
  \left|\int_{V}u_n(u_n-u)\log u^2_nd\mu\right|&\leq\int_{V}|u_n(u_n-u)\log u^2_n|d\mu\nonumber\\
  &\leq\int_{V}C_{\varepsilon}|u_{n}-u|(|u_n|^{1-\varepsilon}+|u_n|^{1+\varepsilon})d\mu\\
   &\leq C_{\varepsilon}(\|u_{n}-u\|_{2}\|u_n\|^{1-\varepsilon}_{2(1-\varepsilon)}
   +\|u_{n}-u\|_{2}\|u_n\|^{1+\varepsilon}_{2(1+\varepsilon)})\nonumber\\
   &\rightarrow 0 \ \ \hbox{as}\ \ n\rightarrow\infty. \nonumber
\end{align}
Similarly, we have
\begin{equation}\label{weak2}
\int_{V}u(u_n-u)\log u^2d\mu\rightarrow0\ \ \hbox{as}\ \ n\rightarrow\infty.
\end{equation}
Thus, combining \eqref{weaka1},\eqref{weak1}, \eqref{weaka2} and \eqref{weak2}, we obtain
\begin{equation*}
  \|u_{n}-u\|^2_{\mathcal{W}}\rightarrow0 \ \ \hbox{as}\ \ n\rightarrow \infty.
\end{equation*}
Then we complete the proof.
\end{proof}

\vskip4pt

\emph{The Proof of Theorem~\ref{existence2}.}~~Let $X=\mathcal{W}$ in Lemma~\ref{sym}. Next, we prove that the functional $\mathcal{J}$ satisfies conditions $(i)$ and $(ii)$ in Lemma~\ref{sym}.

\noindent $(i)$ By \eqref{inequality1} and Lemma~\ref{e}, for any $0<\varepsilon<1$, there  exists $C_{\varepsilon}>0$ such that
\begin{align*}
  \mathcal{J}(u) &= \frac{1}{2}\|u\|^2_{\mathcal{W}}-\frac{1}{2}\int_{V}u^2\log u^2d\mu \\
   &\geq\frac{1}{2}\|u\|^2_{\mathcal{W}}-\frac{1}{2}\int_{\{x\in V:u\geq 1\}}u^2\log u^2d\mu\\
   &\geq\frac{1}{2}\|u\|^2_{\mathcal{W}}-\frac{1}{2}C_{\varepsilon}\|u\|^{2+\varepsilon}_{2+\varepsilon}\\
   &\geq\frac{1}{2}\|u\|^2_{\mathcal{W}}-\frac{1}{2}C_{\varepsilon}C_{4}\|u\|^{2+\varepsilon}_{\mathcal{W}}.
\end{align*}
Taking $\varrho=\left(\frac{1}{2C_{\varepsilon}C_{4}}\right)^{\frac{1}{\varepsilon}}$, then we have
\begin{equation*}
  \mathcal{J}(u)|_{\partial B_{\varrho}} \geq \frac{1}{4}\varrho^2:=\sigma>0.
\end{equation*}

\noindent $(ii)$ Let $\widetilde{\mathcal{W}}$ be a  finite dimensional subspace of $\mathcal{W}$. Suppose $u\in \widetilde{\mathcal{W}}\setminus\{0\}$ and setting $\phi:=\frac{u}{\|u\|_{\mathcal{W}}}$. Then we have
\begin{align*}
  \mathcal{J}(u) &=\frac{1}{2}\|u\|^2_{\mathcal{W}}-\frac{1}{2}\int_{V}u^2\log u^2d\mu\\
                 &=\frac{\|u\|^2_{\mathcal{W}}}{2}\left(\|\phi\|^2_{\mathcal{W}}
                 -\int_{V}\phi^2\log\phi^2d\mu-\int_{V}\phi^2\log\|u\|^2_{\mathcal{W}}\right)\\
                 &\leq\frac{\|u\|^2_{\mathcal{W}}}{2}\left(1+C_{\varepsilon}\|\phi\|^{2-\varepsilon}_{2-\varepsilon}
  +C_{\varepsilon}\|\phi\|^{2+\varepsilon}_{2+\varepsilon}-\|\phi\|^2_2\log\|u\|^2_{\mathcal{W}}\right).
\end{align*}
Therefore, there exists an $R=R(\widetilde{\mathcal{W}})$ such that $\mathcal{J}\leq 0$ on $\widetilde{\mathcal{W}}\setminus B_{R(\widetilde{\mathcal{W}})}$.


From the above argument, we know that the functional $\mathcal{J}$ satisfies all the conditions of Lemma~\ref{sym}. Therefore, the equation \eqref{log} has a sequence of solutions $\{u_n\}\subset \mathcal{W}$ such that $\mathcal{J}(u_n)\rightarrow +\infty$ as $n\rightarrow\infty$.

\section*{Acknowledgements}
This research is supported by National Natural Science Foundation of China (No. 12101355) and the Open Project Program (K202303) of Key Laboratory of Mathematics and Complex System, Beijing Normal University. Part of this work was carried out while this author was visiting Tsinghua University.

\end{document}